\newcommand{\R}{\mathbb{R}}
\newcommand{\PP}{\mathbb{P}}                                                              %%
\begin{document}
\section{Introduction}
In this paper, we analyze a simple greedy algorithm to construct an independent set over a random graph chosen uniformly from those with a given degree. We use a simultaneous construction of 
the random graph from a given degree sequence (i.e., it is a \emph{configuration model}) and an exploration discovering independent set. This idea was first used by \cite{wormaldDF} for $d$-regular graphs and then for \cite{Bermolen2017} and \cite{Brightwell-Janson-Luczak} for more general uniform random graphs. We consider a time-discretized version of the algorithm proposed by \cite{Brightwell-Janson-Luczak} for a bounded degree sequence.

We prove a large deviation principle (LDP), when the number $N$ of the graph's vertices goes to infinity, for a rescaled version $X_t^N=  \frac{X_{[Nt]}^N}{N}$ ($t\in [0,1]$) of the multidimensional Markov chain $\left\{X_n^N\right\}_n$ that counts the number of vertices that have already been assigned into the independent set, and the number of \emph{empty} (or non-explored) vertices from each degree at each step $n$ of the algorithm.

%The cost function for the LD rate function is deduced from Cr\'amer's theorem for the average of random variables with a distribution that approximates the distribution of the new explored nodes in one step conditioning to the number of explored nodes. 
%and the degree of the selected node.
%The LD rate function is deduced from the heuristic proposed in [CITAR], where we conjecture that for this type of process, the cost function for the rate coincides with the rate function from Cr\'amer's theorem for the average of random variables with a distribution that approximates the distribution of the new explored nodes in one step conditioning to the number of explored nodes and the degree of the selected node.

The proof of this result is based on the general strategy proposed by Feng and Kurtz in \cite{F&K}, which is based on the convergence of non-linear semigroups. Their seminal work consists of combining the tools of probability, analysis, and control theory used in the works of \cite{deAcosta}, \cite{Dupuis}, \cite{Evans_Ishii}, \cite{Fleming}, \cite{Puhalskii}, and others to propose a general strategy for the study of large deviations of processes. In the case of Markov processes, this program is carried out in four steps. The first step consists of proving the convergence of non-linear generators $\mathbf{H}_N$ and derives the limit operator $\mathbf{H}$. The second step consists of verifying the \emph{exponential compact containment condition}. The third step consists of proving that $\mathbf{H}$ generates a semigroup $\mathbf{V}= \left\{V_t\right\}_t$. 
The rate function is constructed in terms of that limit $\mathbf{V}$. This limit semigroup usually admits a variational form known as \emph{Nisio semigroup} in control theory. Then, the fourth step consists in constructing a version of the rate $I$ that is much more tractable in practice. 

In our case, after working on the four steps mentioned before, we prove that the rate function can be expressed as an action integral. Moreover, its cost function has an intuitive interpretation in terms of Cr\'amer's theorem for the average of random variables with appropriate distribution, depending on the degree distribution of explored vertices in each step of the algorithm.

We provide a way to find the trajectory that minimizes the LD rate function over a set of trajectories (i.e., the most probable trajectory) by studying the Hamiltonian dynamics associated with the rate function obtained and deduce LD results for the size of the independent set constructed by such an algorithm.  
%------------------------------------------------------------

The rest of the paper is organized as follows. 
In Section \ref{section: Description}, we define the dynamic analyzed in this article, which consists of simultaneously constructing a random graph and an independent set from an initial distribution of degrees. Moreover, we define a sequence of Markov processes related to this algorithm. 
In Section \ref{section: Main-results}, we present the main result of this article: a path-state LDP for the sequence of Markov processes defined in Section \ref{section: Description} along with the heuristic that motivates the result. The detailed proof is deferred to Section \ref{Proof_LDP}.
As a corollary, we obtain the fluid limit of the process and LD results for the size of the independent set constructed by the algorithm. 
In Section \ref{section: d-regular}, we analyze those results for the particular case of $d$-regular graphs. 

%%%%%%%%%%%%%%%%%%%%%%%%%%%%%%%%%%%%%%%%%%%%%%%%%%%%%%%%%%%%%%%
\section{Description of the dynamics }\label{section: Description}
This section presents the dynamics considered in this article, which consist of a simultaneous construction of a random graph and an independent set from an initial distribution of degrees.

We start with a set of vertices $\mathcal{V}^N= \left\{1, 2, \dots, N\right\}$, such that $\text{deg}(i)\leq D <\infty$ for all $i$ and such that the initial distribution of degrees
$\frac{1}{N} \# \left\{i:\, \text{deg}(i)=j\right\}$ converges to $p_j\geq 0$, when the number of vertices $N$ goes to infinity, for all $j=0,\dots D$ with $\sum\limits_{j=0}^D p_j=1$. Let us denote $\lambda= \sum_j j p_j$. 

Each vertex $i$ of the graph has a number $\text{deg} (i)$ of half-edges available to be paired with the half-edges of other vertices. Next we describe how these half-edges are paired as the random graph is constructed.

At each step $n=0, 1, \dots, N$, the set $\mathcal{V}^N$ is partitioned into three classes:
\begin{itemize}
\item a set $\mathcal{S}_n^N$ of vertices that have already been assigned into the independent set, with all half-edges paired with vertices out of $\mathcal{S}_n^N$;
\item a set $\mathcal{B}_n^N$ of \emph{blocked vertices}, where at least one of its half-edges has been paired with a half-edge from $\mathcal{S}_n^N$;
\item a set $\mathcal{E}_n^N$ of \emph{empty vertices}, from which no half-edge has yet been paired. $\mathcal{E}_n^N$ can be decomposed as $\mathcal{E}_n^N= \bigcup\limits_{j=0}^D \mathcal{E}_n^N(j)$, where $\mathcal{E}_n^N(j)$ is the set of empty vertices of degree $j$ at step $n$.
\end{itemize}

Initially, all vertices are \emph{empty}, i.e. $\mathcal{E}_0^N=\mathcal{V}^N$ and $\mathcal{S}_0^N= \emptyset$. At step $n$, a vertex $v$ is selected uniformly from $\mathcal{E}_n^N$, it is assigned to $\mathcal{S}_n^N$, and all its half-edges are paired, drawing uniformly within the available half-edges. This pairing procedure results in the following updates:

\begin{itemize}
\item $v$ is moved from $\mathcal{E}_n^N$ to $\mathcal{S}_n^N$,
\item each half-edge incident to $v$ (if it has some) is paired  with some other uniformly randomly chosen vertices among the currently unpaired half-edges,
\item all vertices in $\mathcal{E}_n^N$ with some half-edges already paired with a half-edge from $v$ are moved to $\mathcal{B}_n^N$.
\end{itemize}

Note that some half-edges from $v$ may be paired with half-edges from $\mathcal{B}_n^N$, or indeed with other half-edges from $v$, and no change in the status of a vertex results from such pairings. At each step $n$, the only paired edges are those with at least one endpoint in $\mathcal{S}_n^N$. This is the main difference between the dynamics described in \cite{Bermolen2017} and \cite{Brightwell-Janson-Luczak} for a continuous-time version of this algorithm. In \cite{Bermolen2017}, the neighbors of blocked vertices are revealed at each step, meaning that degrees of empty vertices can change over time. For simplicity, we do not do this.

The algorithm terminates at the first step $n=T_N^*$ at which $\mathcal{E}_n^N=\emptyset$. At this point, there may still be some unpaired half-edges pointing out from blocked vertices. These may be paired off uniformly at random to complete the construction of the graph $G\left(N, \left(\text{deg}(1), \dots, \text{deg}(N)\right)\right)$. Note that $T_N^*$ coincides with the size of the independent set constructed by the algorithm. The expected value of $\frac{T_N^*}{N}$ is usually called the \emph{jamming constant} of the graph.

For each $n\in \left\{0,1,\dots, N \right\}$, let us denote $X^N_n= \left(S^N_n, U^N_n, E^N_n(0), E^N_n(1), \dots, E^N_n(D) \right)$ with:
\begin{itemize}
\item $S_n^N=\left|\mathcal{S}_n^N\right|$, the number of vertices that have already been assigned into the independent set at step $n$;
\item $U_n^N$, the total number of unpaired half-edges (corresponding to empty or blocked vertices) at step $n$;
\item $E_n^N(j)=\left|\mathcal{E}_n^N(j)\right|$, the number of empty vertices with degree $j$ at step $n$.
\end{itemize}
$\left\{X^N_n\right\}_n$ is a discrete-time Markov process in $\R^{D+3}$. By construction, it is updated at step $n+1$ as follows: 
\begin{itemize}
\item The vertex $v$ is assigned to $\mathcal{S}_n^N$. Then, $S_{n+1}^N=S_n^N +1$.

\item If $v \in \mathcal{E}_n^N(k)$ with $k\neq 0$, then:

\begin{enumerate}
\item Each one of the $k$ half-edges pointing out from $v$ is paired in turn with some other uniformly randomly chosen between the currently unpaired half-edges. Let $H^N$ be the number of half-edges from $v$ that are paired with another vertex different from $v$ (blocked or empty), i.e., that do not form loops. Note that $H^N$ has a Hypergeometric distribution $\mathbf{Hyper} \left(U_n^N, U_n^N-k, k\right)$.
Then, $U_{n+1}^N=U_n^N -k-H^N$.
\item We have to distribute those $H^N$ half-edges between the $U_n^N-\sum_{j} jE_n^N(j)$ half-edges corresponding to blocked vertices and the $\sum_{j} jE_n^N(j)$ half-edges corresponding to empties. Let $B^N$ be the number of half-edges of $v$ that are paired to blocked vertices, then $B^N$ has a Hypergeometric distribution with parameters $U_n^N-k$, $U_n^N-\sum_{j=1}^D j E_n^N(j)$ and $H^N$.

\item Now, if $H^N=h$ (with $h\leq k$) and $B^N=b$ (with $b\leq h$), there are $h-b$ half-edges to distribute among the empties. Let $W_j^N$ be the number of half-edges from $v$ that are paired to some $w\in \mathcal{E}_n^N(j)$. Note that $\left(W_1^N, \dots, W_D^N\right)$ has a (multivariate) Hypergeometric distribution:\\
$
 \mathbf{Hyper}\left( \sum_{j} jE_n^N(j)-k, E_n^N(1), \dots, k\left(E_n^N(k)-1\right), \dots, DE_n^N(D), h-b\right)
$.
\item Finally, let $\tilde{W}_j^N$ be the number of empty vertices of degree $j$ that share at least one edge with $v$.  Then, $E_{n+1}^N(0)=E_n^N(0)$, $E^N_{n+1}(j)=E_n^N(j)-\tilde{W}_j^N$ if $j\neq k$ and $E^N_{n+1}(k)=E_n^N(k)-1-\tilde{W}_k^N$. 
\end{enumerate}
\item If $\mathbf{deg}(v)=0$, then $S_{n+1}^N=S_n^N+1$, $U_{n+1}^N = U_n^N$, $E_{n+1}^N(0)=E_n^N(0)-1$ and $E_{n+1}^N(j)=E_n^N(j)$ for all $j\neq 0$.
\end{itemize}

According to the following Lemma, the distribution of $\left( \tilde{W}_1, \dots, \tilde{W}_D\right)$ can be approximated by the Hypergeometric distribution corresponding to $\left( W_1, \dots, W_D\right)$.

\begin{lemma}\label{Lemma:W} Let $x=\left(s, u, e_0, \dots, e_D\right)$ be an element in the space state of $\left\{X_n^N\right\}_n$, and $\left(\omega_j\right)_j$ with $0\leq \omega_j\leq e_j$ such that $\sum_j \omega_j \leq h-b$. Then,
\begin{gather*}
\underset{N\rightarrow \infty}{\lim} \mathbb{P}\left(\left(\tilde{W}_j^N\right)_j = \left( \omega_j\right)_j \Big\vert X_n^N=x; \mathbf{deg}(v)=k;  H^N=h; B^N=b; \left(W_j^N\right)_j=\left(\omega_j\right)_j \right)=1
\end{gather*}
\end{lemma}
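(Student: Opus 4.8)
The plan is a routine ``no collisions in a bounded sample drawn from a large population'' estimate. First I would record the deterministic inequality $\tilde W_j^N\le W_j^N$: the number of \emph{distinct} empty vertices of degree $j$ that share an edge with $v$ cannot exceed the number of $v$'s half-edges that are paired to degree-$j$ empties. Consequently, on the conditioning event $\{(W_j^N)_j=(\omega_j)_j\}$ one has $\tilde W_j^N=\omega_j$ for every $j$ \emph{if and only if} no two half-edges of $v$ are paired to the same empty vertex. So the statement is equivalent to showing that, conditionally on $X_n^N=x$, $\mathbf{deg}(v)=k$, $H^N=h$, $B^N=b$ and $(W_j^N)_j=(\omega_j)_j$, the probability of such a ``collision'' tends to $0$.

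Next I would unwind the conditioning. By exchangeability of the uniform random pairing, once $H^N$, $B^N$ and the per-degree counts $(W_j^N)_j$ are fixed, the $\omega_j$ half-edges of $v$ that land on degree-$j$ empty vertices form a uniformly chosen $\omega_j$-subset of the $j e_j$ half-edges carried by those vertices (read $e_k-1$ when $j=k$, since $v$ itself is excluded, an asymptotically irrelevant correction), and these subsets are independent across $j$. Since every degree-$j$ empty vertex carries exactly $j$ half-edges, the probability that the chosen $\omega_j$ half-edges lie in $\omega_j$ distinct vertices equals
\begin{equation*}
\frac{\binom{e_j}{\omega_j}\,j^{\omega_j}}{\binom{j\,e_j}{\omega_j}}=\prod_{i=0}^{\omega_j-1}\frac{j\,(e_j-i)}{j\,e_j-i}\ \le\ 1 ,
\end{equation*}
while a crude union bound over the $\binom{\omega_j}{2}$ pairs of $v$'s target half-edges bounds the complementary (collision) probability by $\binom{\omega_j}{2}\,\frac{j-1}{j\,e_j-1}$.

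Then I would pass to the limit. Only indices $j$ with $\omega_j\ge1$ contribute a non-trivial factor, and for those $e_j\ge\omega_j\ge1$; since $x=x^N$ runs over states of the $N$-th chain for which each relevant coordinate $e_j=e_j^N$ grows (of order $N$ on the part of the state space that matters for the scaling limit, in particular $e_j^N\to\infty$), while $\omega_j\le h-b\le k\le D$ stays bounded, every factor in the product above converges to $1$. Multiplying the finitely many such probabilities over $j=1,\dots,D$ and using $\tilde W_j^N\le\omega_j$ gives $\mathbb{P}\bigl((\tilde W_j^N)_j=(\omega_j)_j\mid\cdots\bigr)\to1$, which is the claim.

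The only point requiring care is the bookkeeping behind the second step: making rigorous that conditioning on $\{H^N=h\}$, $\{B^N=b\}$ and $\{(W_j^N)_j=(\omega_j)_j\}$ fixes only which \emph{category} (loop / blocked / degree-$j$ empty) each half-edge of $v$ falls into, and not which particular half-edge inside that category, so that within each degree class the target half-edges remain uniform without replacement and independent across classes. This is standard for sequential uniform pairing, but it is where one must be slightly attentive; beyond it everything is elementary asymptotics of ratios of binomial coefficients, and the hypothesis $\sum_j\omega_j\le h-b\ (\le D)$ serves precisely to keep the sample size fixed as $N\to\infty$.
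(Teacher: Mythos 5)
The paper does not actually prove this lemma: it only cites Equation 17 of the reference Bermolen et al.\ 2017 and records a dictionary between notations, so there is no in-paper argument to compare against. Your self-contained proof is the natural one and, as far as I can tell, it is sound. The reduction is correct ($\tilde W_j^N\le W_j^N$ always, with equality exactly on the no-collision event), the exchangeability step correctly identifies the conditional law of the targeted half-edges within each degree class as uniform without replacement with the classes conditionally independent given the counts $(\omega_j)_j$ (this is exactly what the multivariate hypergeometric structure of $(W_j^N)_j$ encodes, so the ``bookkeeping'' you flag is standard but does need to be said), and the ratio-of-binomials computation together with the union bound gives a collision probability that is $O(1/e_j)$ for each class, which vanishes once the populations grow. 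The $e_k-1$ correction for $j=k$ is indeed asymptotically irrelevant. The only presentational nit is that ``using $\tilde W_j^N\le \omega_j$'' should read ``using $\tilde W_j^N\le W_j^N=\omega_j$ on the conditioning event''; the logic is unchanged.

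You are also right to flag that the lemma as literally written is imprecise: $x$ is declared to be an element of the state space of the unscaled chain, so the $e_j$ are integers, and it is then held fixed as $N\to\infty$, which by itself does not force the limit $1$ (take $j=2$ and $e_j=\omega_j=2$: the no-collision probability is $2/3$ independently of $N$). The intended reading, which you adopt, is $x=x^N$ with $e_j^N\to\infty$ for every $j$ with $\omega_j\ge1$; this is the regime relevant to the fluid and large-deviations scaling, where $e_j^N$ is of order $N$. With that reading your bound $\binom{\omega_j}{2}\frac{j-1}{je_j^N-1}\to 0$, multiplied over the finitely many classes, gives the claim. It would have been worth stating explicitly that only indices with $\omega_j\ge 2$ can contribute a nontrivial factor, so the hypothesis $e_j^N\to\infty$ is only needed for those $j$.
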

\begin{proof}
See  Equation \textbf{17} from \cite{Bermolen2017}. In the article notation: $W_j^N= Y\left(\mu_{t^-}\right)(j)$ and $\tilde{W}_j^N= \tilde{Y}\left(\mu_{t^-}\right)(j)$.
\end{proof}

Let $X_t^N:=\frac{X_{[Nt]}^N}{N}$ be a rescaled version of $X_n^N$ with $t \in [0,1]$. The state space of  $X_t^N$ is  
$
E^N= \left\{ \frac{1}{N}(\hat{s}, \hat{u}, \hat{e}_0, \dots, \hat{e}_D):\, \hat{s}, \hat{e}_i\in \{0,\dots, N\}; \, \sum_{j} j\hat{e}_j \leq \hat{u}\right\}
$
which is a subset of the compact set $E:=\left\{(s,u, e_0, e_1,\dots, e_D) \in [0,1]\times \R \times[0,1]^{D+1}:\,  \sum_{j} je_j \leq u\leq \lambda\right\}$.  The size of the independent set constructed by such an algorithm is given by
$
T_N^* = \inf \left\{ n:\, \sum_{j} E_n^N(j)=0\right\} = N \inf \left\{ t\in [0,1]: \,  \sum_{j} E_{[Nt]}^N(j) =0 \right\}
$.

We provide large deviations for both sequences $\left\{X^N_.\right\}_N$ and $\left\{\frac{T_N^*}{N}\right\}_N$.
%%%%%%%%%%%%%%%%%%%%%%%%%%%%%%%%%%%%%%%%%%%%%%%%%%%%%%%%%%%%

\section{Main results} \label{section: Main-results}
In this section, we present the main results of the paper. 
In Subsection \ref{subsection:1}, we present an LDP for $X^N = \left\{X^N_t\right\}_{0\leq t \leq 1}$  and a heuristic description to derive this result. The proof of this theorem is based on the work of \cite{F&K} and is deferred to Section \ref{Proof_LDP}. 
In Subsection \ref{subsection:2}, we deduce the corresponding fluid limit.  
In Subsection \ref{subsection:3}, we provide a way of finding the trajectory that minimizes the LD rate function over a set of trajectories (i.e., the most probable trajectory) by studying the Hamiltonian dynamics associated with the rate function obtained.  
Finally, in Subsection \ref{subsection: 4}, we deduce an LD result for the size of the independent set constructed by such an algorithm. 

\subsection{LDP for $\left\{X^N\right\}_N$}  \label{subsection:1}
%The theorem that we state below is the most important result of this document. 
We now state our main result.
\begin{theorem}[LDP for $\left\{X^N\right\}_N$] \label{thm:LDPX^N}
The sequence $\left\{ X^N \right\}_N$  with $X^N = \left\{X^N_t\right\}_{0\leq t \leq 1}$ verifies an LDP on $D_E\left[0,1\right]$ with good rate function $I\colon D_E [0,1] \rightarrow \left[0, +\infty\right]$ such that $I\left( \mathbf{x}\right)= \intop_{0}^{1} L \left( \mathbf{x}(t), \dot{\mathbf{x}}(t) \right) \text{d}t$ if $\mathbf{x} \in \mathcal{H}_L$ and it is $+\infty$ in other case. 
$L\colon E\times \R^{D+3}\rightarrow \mathbb{R}$ is the cost function
\begin{equation} \label{eq:LX}
L(x, \beta)= \sup_{\alpha \in \R^{D+3}} \left\{ \left\langle \alpha, \beta\right\rangle - H\left( x, \alpha\right)\right\},
\end{equation}
with $H\colon E \times \R^{D+3}\rightarrow \R$ given by
\begin{equation} \label{eq:HX}
H\left(x, \alpha\right)= \begin{cases}
\log \left[  {\displaystyle \sum_{k}} e^{\alpha_s-2k\alpha_u-\alpha_k} \left(1+\sum\limits_{j=1}^D \left(e^{-\alpha_j}-1\right) \frac{je_j}{u}\right)^k \frac{e_k}{ \sum_{j} e_j} \right], & \text{ if } \sum_{j} e_j>0, \\
0, & \text{ if }  \sum_{j} e_j=0, 
\end{cases}
\end{equation}
being $x=\left(s, u, e_0, e_1, \dots, e_D\right)$ and $\alpha=\left(\alpha_s, \alpha_u, \alpha_0, \alpha_1, \dots, \alpha_D\right)$. $\mathcal{H}_L$ is the set of all absolutely continuous function $\mathbf{x}: [0,1]\rightarrow E$, $\mathbf{x}(t)=\left(s(t), u(t), e_0(t), e_1(t), \dots, e_D(t)\right)$ with initial value $\mathbf{x}(0)=(0, \lambda, p_0, p_1, \dots, p_D)$ and such that $s(t)$ is increasing, $u(t)$ and $e_j(t)$ are decreasing, and the integral $\int_0^1 L\left(\mathbf{x}(t), \dot{\mathbf{x}}(t)\right)dt$ exists and it is finite.
\end{theorem}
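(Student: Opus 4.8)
The plan is to follow the four‑step Feng--Kurtz program of \cite{F&K} for discrete‑time Markov chains, as announced in the introduction; the heuristic leading to \eqref{eq:HX} will be turned into the rigorous identification of the limit operator.

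\emph{Step 1 (convergence of nonlinear generators).} For the time‑rescaled chain $X^N_t = X^N_{[Nt]}/N$, one unit of macroscopic time corresponds to $N$ steps, so the associated discrete nonlinear generator acting on $f\in C^1(E)$ is
\[
\mathbf{H}_N f(x) \;=\; \log \E\!\left[\, e^{N\left(f(X^N_{n+1}/N)-f(x)\right)} \,\middle|\, X^N_n/N = x\,\right].
\]
Writing $\Delta^N := X^N_{n+1}-X^N_n$ for the one‑step increment, which is bounded in each coordinate by a finite constant depending only on $D$ (since $S$ increases by one, $U$ decreases by $k+H^N\le 2D$, and $\sum_j\tilde W^N_j\le k\le D$), a Taylor expansion gives $N\big(f(x+\Delta^N/N)-f(x)\big)=\langle\nabla f(x),\Delta^N\rangle+O(1/N)$, so that $\mathbf{H}_N f(x)\to \log\E_x\big[e^{\langle\nabla f(x),\Delta\rangle}\big]$. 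It then remains to identify this limiting log‑moment generating function with $H(x,\nabla f(x))$ of \eqref{eq:HX}: the factor $e^{\alpha_s}$ records that $S$ always increases by one; conditioning on the degree $k$ of the selected vertex (probability $e_k/\sum_j e_j$, the $k=0$ term corresponding to the selection of an isolated vertex, for which only $S$ increases and $E_0$ decreases by one), the number $H^N$ of non‑loop half‑edges concentrates on $k$ so $U$ decreases by $\approx 2k$, giving $e^{-2k\alpha_u}$; the vertex $v$ itself leaves $\mathcal E(k)$, giving $e^{-\alpha_k}$; and by the multinomial approximation of the multivariate hypergeometric law of $(W^N_j)_j$ together with Lemma~\ref{Lemma:W} (which permits replacing $\tilde W^N_j$ by $W^N_j$ in the limit), each of the $k$ half‑edges of $v$ independently blocks an empty degree‑$j$ vertex with asymptotic probability $je_j/u$, producing the factor $\big(1+\sum_j(e^{-\alpha_j}-1)je_j/u\big)^k$. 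On $\{\sum_j e_j=0\}$ the algorithm has terminated and the chain is frozen, so $\mathbf{H}_N f\to 0 = H(x,\cdot)$ there. The convergence must be phrased in the precise sense required by \cite{F&K} (through the upper and lower extensions $\mathbf H_\dagger,\mathbf H_\ddagger$), using $C^1$ test functions together with $O(1/N)$ corrections.

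\emph{Step 2 (exponential compact containment).} This step is essentially free: by construction $X^N_t$ lives in the compact set $E$ for every $t$ and every $N$, so the compact containment condition holds with exiting probability identically zero, and exponential tightness of $\{X^N\}$ in $D_E[0,1]$ follows from the uniform boundedness of the increments by a standard estimate.

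\emph{Step 3 (semigroup generation via a comparison principle).} Here one shows that, for $\rho>0$ small, the Hamilton--Jacobi resolvent equation $f-\rho\mathbf{H}f=g$ admits a comparison principle on $E$, whence $\mathbf H$ generates a semigroup $\mathbf V=\{V_t\}$ with the Nisio variational form $V_t g(x)=\sup\big\{g(\mathbf x(t))-\int_0^t L(\mathbf x(s),\dot{\mathbf x}(s))\,ds:\ \mathbf x(0)=x\big\}$, $L$ the Legendre transform \eqref{eq:LX} of $H$. Two structural features help: $H(x,\cdot)$ is convex and, being the cumulant generating function of an increment bounded by a constant depending only on $D$, it is globally Lipschitz in $\alpha$ with such a constant; and $H$ is continuous in $x$ on the relatively open region $\{\sum_j e_j>0\}$. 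The delicate points are the behaviour on the boundary of $E$ --- handled either by a penalization argument or by verifying that trajectories of finite cost cannot exit $E$, exploiting the monotone sign structure of the coordinates --- and the discontinuity of $H$ across the absorbing face $\{\sum_j e_j=0\}$, where the dynamics stop and $H$ vanishes, so the comparison principle is extended by treating that face separately. \emph{This is the step I expect to be the main obstacle.}

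\emph{Step 4 (tractable form of the rate function).} Given the Nisio semigroup, the general theorem of \cite{F&K} yields the LDP for $\{X^N\}$ on $D_E[0,1]$ with good rate function $I(\mathbf x)=I_0(\mathbf x(0))+\int_0^1 L(\mathbf x(t),\dot{\mathbf x}(t))\,dt$, finite only on absolutely continuous paths. The initial law is deterministic, $X^N_0\to(0,\lambda,p_0,\dots,p_D)$, so $I_0$ is $0$ at that point and $+\infty$ elsewhere, forcing $\mathbf x(0)=(0,\lambda,p_0,\dots,p_D)$. Finally, because every increment of the chain has a fixed sign in each coordinate ($S$ up, $U$ and each $E_j$ down), the effective domain of $\beta\mapsto L(x,\beta)$ is contained in the corresponding cone, so finiteness of $\int_0^1 L(\mathbf x(t),\dot{\mathbf x}(t))\,dt$ forces $s$ increasing and $u,e_j$ decreasing; this identifies the admissible class of paths with $\mathcal H_L$ and gives the stated form of $I$. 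Goodness of $I$ follows from the compactness of $E$ and the lower semicontinuity of the action functional.
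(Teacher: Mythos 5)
Your proposal follows the same Feng--Kurtz four-step program that the paper uses, and most of the steps are essentially identical to the paper's sketch. Two remarks are worth making.

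First, for Step~1 the paper invokes not only a Taylor expansion but also Stirling's formula: the one-step transition probabilities involve ratios of (multivariate) hypergeometric probabilities, and Stirling is what turns the resulting combinatorial expressions into the smooth moment-generating-function form appearing in~\eqref{eq:HX}, with uniform $O(1/N)$ error on $E^N$. Your heuristic description of each factor of $H$ is the right one, but a Taylor expansion of $f$ alone does not give the identification of the limiting cumulant generating function unless one also controls the transition probabilities asymptotically, which is where Stirling enters.

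Second, and more substantially, you explicitly flag Step~3 (the comparison principle for $f - \rho\mathbf{H}f = g$) as the main obstacle and leave it open, listing several possible strategies without committing to one. This is precisely where the paper has actual content: it invokes Proposition~4.2 of Kraaij, which reduces the comparison principle to verifying
\begin{equation*}
\liminf_{\alpha\to\infty} H\bigl(x^\alpha,\alpha\psi_x(x^\alpha,y^\alpha)\bigr)-H\bigl(y^\alpha,\alpha\psi_x(x^\alpha,y^\alpha)\bigr)\le 0
\end{equation*}
for the doubling sequences $(x^\alpha,y^\alpha)$ associated with the distance function $\psi(x,y)=\tfrac12\lVert x-y\rVert^2$, and this inequality is what must be checked for the specific $H$ of~\eqref{eq:HX}. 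Your observation that $H(x,\cdot)$ is globally Lipschitz in $\alpha$ (being the cgf of a bounded increment) is a useful structural fact, but on its own it does not yield the comparison principle; the difficulty is the $x$-dependence, in particular the blow-up of the ratios $je_j/u$ near the boundary of $E$ and the discontinuity of $H$ across $\{\sum_j e_j=0\}$, which is exactly what the Kraaij-type estimate is designed to control. Until you either reproduce that verification or supply an alternative argument (penalization, or a direct uniqueness proof for viscosity solutions of~\eqref{eq:Hamilton-Jacobi} adapted to the absorbing face), Step~3 remains a genuine gap, not merely a deferred computation. Steps~2 and~4 of your proposal match the paper.
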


The proof of this result is deferred to Section \ref{Proof_LDP}. In what follows we provide an intuitive way to construct the cost function of the LDP in terms of the rate function provided by Cramer's theorem for the average of the approximated distribution of the new explored vertices in one step conditioning to the number of explored vertices.

Consider a curve $\mathbf{x}(t)= \left( s(t), u(t), e_0(t), \dots, e_D(t)\right)$ contained in $E$ and such that $X_t^N \approx \mathbf{x}(t)$. Then, the infinitesimal increments $\dot{\mathbf{x}}(t)$ correspond to the mean number of new explored vertices from each degree in one step, as can be deduced from the following statement:
\begin{gather*}
\dot{\mathbf{x}}(t)\approx \frac{\mathbf{x}(t+h)-\mathbf{x}(t)}{h} \approx \frac{X_{[N(t+h)]}^N - X_{[Nt]}^N}{Nh} = \displaystyle \frac{1}{Nh} \sum_{n=[Nt]}^{[N(t+h)]-1} \left(X_{n+1}^N-X_n^N\right).
\end{gather*}

\begin{proposition}
The distribution of the number of new explored vertices in one step $X_{n+1}^N-X_n^N$, conditioning to $X_t^N \approx x(t)=\left( s(t), u(t), e_0(t), \dots, e_D(t)\right)$, can be approximated by the random vector:
\begin{gather*}
Z^{\mathbf{x}(t)}=\begin{cases}
\left(1,0,-1,0,\dots,0\right), & \text{ with probability } \frac{e_0(t)}{\sum_j e_j(t)},\\
\left(1,-2k,0,-M_1, \dots, -1-M_k, \dots, -M_D\right), & \text{ with probability } \frac{e_k(t)}{\sum_j e_j(t)} \, (1\leq k\leq D),
\end{cases}
\end{gather*}
where $M\sim \mathbf{Mult}\left(K-B, q_1, \dots, q_D\right)$ is a multinomial vector depending on $K$ such that $\PP(K=k)=\frac{e_k(t)}{\sum_j e_j(t)}$ for $k\in \{0,\cdots, D\}$, $B=B(K)\sim \mathbf{Bin}\left(K, 1-\frac{\sum_j je_j(t)}{u(t)}\right)$, and $q_i=\frac{i e_i(t)}{\sum_{j}j e_j(t)}$.

\end{proposition}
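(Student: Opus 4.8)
The plan is to read the statement as a weak-convergence assertion: fix $t$ and a sequence of admissible states $x_N=(s_N,u_N,e_N(0),\dots,e_N(D))\in E^N$ with $x_N/N\to\mathbf{x}(t)=(s(t),u(t),e_0(t),\dots,e_D(t))$ and $\sum_j e_j(t)>0$ (if $\sum_j e_j(t)=0$ the algorithm has terminated and there is no increment), and show that the conditional law of $X_{n+1}^N-X_n^N$ given $X_n^N=Nx_N$ converges to the law of $Z^{\mathbf{x}(t)}$. The first step is the selection of $v$: since $v$ is uniform in $\mathcal{E}_n^N$, one has $\PP(\deg(v)=k\mid X_n^N=Nx_N)=e_N(k)/\sum_j e_N(j)\to e_k(t)/\sum_j e_j(t)$, which is exactly the law of $K$; hence it suffices to establish the convergence conditionally on $\{\deg(v)=k\}$ for each fixed $k$ and then average with these weights. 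For $k=0$ the update rules of Section~\ref{section: Description} give $X_{n+1}^N-X_n^N=(1,0,-1,0,\dots,0)$ deterministically, which is the first line of $Z^{\mathbf{x}(t)}$.

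For $k\ge 1$ the plan is to pass to the limit in the three nested hypergeometric laws, in the order in which they were introduced. First, $H^N\sim\mathbf{Hyper}(u_NN,u_NN-k,k)$: with $k$ fixed and $u_NN\to\infty$ (the regime $u(t)>0$; the degenerate strata are discussed below), the probability of forming a loop vanishes, so $H^N\to k$ in probability. Second, conditionally on $H^N$, $B^N\sim\mathbf{Hyper}(u_NN-k,u_NN-\sum_j j e_N(j)N,H^N)$; a hypergeometric with a fixed number of draws and population tending to infinity converges to the binomial with the same number of draws and success probability equal to the limiting proportion, so, using $H^N\to k$, $B^N\Rightarrow B\sim\mathbf{Bin}\!\left(k,\,1-\tfrac{\sum_j j e_j(t)}{u(t)}\right)$. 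Third, conditionally on $(H^N,B^N)$, the vector $(W_1^N,\dots,W_D^N)$ has the displayed multivariate hypergeometric law with $H^N-B^N$ draws; by the same large-population argument together with $H^N-B^N\Rightarrow k-B$ (Slutsky, using that the $H^N$-limit is degenerate and that $H^N-B^N$ takes finitely many values), it converges to $\mathbf{Mult}(k-B,q_1,\dots,q_D)$ with $q_i=\tfrac{ie_i(t)}{\sum_j j e_j(t)}$. The nesting is harmless precisely because the outermost limit $H^N\to k$ is a constant, so each composition/conditioning step passes to the limit by a Slutsky-type argument.

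It then remains to replace $\tilde W_j^N$ by $W_j^N$: Lemma~\ref{Lemma:W} states that, conditionally on $X_n^N=Nx_N$, $\deg(v)=k$, $H^N=h$, $B^N=b$ and $(W_j^N)_j=(\omega_j)_j$, one has $(\tilde W_j^N)_j=(\omega_j)_j$ with probability tending to $1$; hence $(\tilde W_j^N)_j$ has the same weak limit as $(W_j^N)_j$. Collecting the components from the update rules — $S_{n+1}^N-S_n^N=1$, $U_{n+1}^N-U_n^N=-k-H^N$, $E_{n+1}^N(0)-E_n^N(0)=0$ (a degree-$0$ vertex has no half-edge and is never blocked by $v$), $E_{n+1}^N(j)-E_n^N(j)=-\tilde W_j^N$ for $j\ne k$ and $E_{n+1}^N(k)-E_n^N(k)=-1-\tilde W_k^N$ — and letting $N\to\infty$ yields $(1,-2k,0,-M_1,\dots,-1-M_k,\dots,-M_D)$ with $M\sim\mathbf{Mult}(k-B,q_1,\dots,q_D)$, i.e.\ the second line of $Z^{\mathbf{x}(t)}$; averaging over $k$ with weights $e_k(t)/\sum_j e_j(t)$ finishes the proof. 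The only genuinely delicate point is the bookkeeping around the limiting regime: one must check that the hypothesis $X_t^N\approx\mathbf{x}(t)$ indeed forces each half-edge population entering a hypergeometric to diverge (so the binomial/multinomial approximations apply), and dispose of the degenerate strata $u(t)=0$ and $\sum_j je_j(t)=0$ — in both of which $K=0$ is forced in the limit, the $k\ge1$ contributions vanish, and the formulas for $B$ and $M$ (a $\mathbf{Bin}$ and a $\mathbf{Mult}$ with $0$ trials) are consistent with the convention that the zero vector is returned. Once this is settled, the three limit transitions and the substitution via Lemma~\ref{Lemma:W} are routine.
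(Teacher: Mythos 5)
Your proposal is correct and follows the same route as the paper: decompose on the degree $k$ of the uniformly drawn vertex (giving the law of $K$), use the vanishing loop probability to obtain $H^N\approx k$, pass the nested hypergeometric laws for $B^N$ and $(W_j^N)_j$ to their binomial and multinomial limits as the relevant half-edge populations diverge, and invoke Lemma~\ref{Lemma:W} to replace $\tilde W_j^N$ by $W_j^N$. You merely make explicit what the paper leaves informal, namely the Slutsky-type bookkeeping needed to compose the nested limits and the treatment of the degenerate strata $u(t)=0$ or $\sum_j je_j(t)=0$.
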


\begin{proof}

At step $n$, the vertex $ v\in\mathcal{E}_n^N(k)$ is drawn uniformly. If $k\neq 0$, then $X_{n+1}^N-X_n^N=\left(1, -k-H^N, 0, -\tilde{W}_1^N, \dots, -1-\tilde{W}_k^N, \dots, -\tilde{W}_D^N\right)$, where
%(see Subsection \ref{subsection: Description}):

\begin{itemize}
\item $H^N$ is the number of half-edges from $v$ that are joined to another node different from $v$. As the probability of loops converges to zero (see \cite{Brightwell-Janson-Luczak}), then $H^N\approx k$.

\item Lemma \ref{Lemma:W} assures that $\tilde{W}_j^N \approx W_j^N$, where $\left(W_1^N, \dots, W_D^N\right)$ has a (multivariate) Hypergeometric distribution with parameters
$\sum_j jE_n^N(j)-k$, $ E_n^N(1)$, $\dots$, $jE_n^N(j)$, $\dots$, $k\left(E_n^N(k)-1\right)$, $\dots$, $DE_n^N(D)$, and $k-B^N$. Note that $B^N$ can be approximated by a Binomial random variable $B$ with parameters $n=k$ and $ p=\lim_N \frac{U_n^N-\sum_j jE_n^N(j)}{U_n^N-k}=1-\frac{\sum_j je_j(t)}{u(t)}$. Moreover, $\left(W_1^N, \dots, W_D^N\right)$ can be approximated by a Multinomial random vector
$
\left(W_1^N, \dots, W_D^N\right) \approx \left(M_1, \dots, M_D\right) \sim \mathbf{Mult}\left(k-B, q_1, \dots, q_D\right)
$,
with $q_i=\frac{i e_i(t)}{\sum_{j}j e_j(t)}$.
\end{itemize}

If $v\in \mathcal{E}_n^N(0)$, then $X_{n+1}^N-X_n^N=\left(1,0,-1,0,\dots,0\right)$. 

\end{proof}
%\begin{gather*}
%Z_n^{\mathbf{x}(t)}=\begin{cases}
%\left(1,0,-1,0,\dots,0\right), & \text{ with probability } %\frac{e_0(t)}{\sum_j e_j(t)},\\
%\left(1,-2k,0,-M_1, \dots, -1-M_k, \dots, -M_D\right), & \text{ with %probability } \frac{e_k(t)}{\sum_j e_j(t)} \, (1\leq k\leq D).
%\end{cases}
%\end{gather*}
%Note that $Z_n^{\mathbf{x}(t)}$ depends on the random variable $K$ with $K\in \{0,\dots, D\}$ and $\PP(K=k)=\frac{e_k(t)}{\sum_j e_j(t)}$, the binomial variable $B=B(K)\sim \mathbf{Bin}\left(K, 1-\frac{\sum_j je_j(t)}{u(t)}\right)$, and the multinomial vector $M=M(K,B)\sim \mathbf{Mult}\left(K-B, q_1, \dots, q_D\right)$ with $q_i=\frac{i e_i(t)}{\sum_{j}j e_j(t)}$.

\begin{proposition}
The cost function $L\left(\mathbf{x}(t), \dot{\mathbf{x}}(t)\right)$ defined in \eqref{eq:LX} coincides with 
the LDP rate function for the average of  i.i.d random variables $(Z_i^{\mathbf{x}(t)})_{i \in  \mathbb N}$ distributed as $Z^{\mathbf{x}(t)}$:
\begin{gather*}
L\left(\mathbf{x}(t), \dot{\mathbf{x}}(t)\right)=\underset{\alpha}{\sup} \left\{\left\langle \alpha, \dot{\mathbf{x}}(t)\right\rangle - H\left(\mathbf{x}(t), \alpha\right)\right\}= \Lambda^*_{Z^{\mathbf{x}(t)}}\left(\dot{\mathbf{x}}(t)\right).
\end{gather*}
\end{proposition}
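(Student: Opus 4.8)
The plan is to reduce the whole statement to the single pointwise identity
\[
H\bigl(\mathbf{x}(t),\alpha\bigr)=\Lambda_{Z^{\mathbf{x}(t)}}(\alpha):=\log \E\!\left[e^{\langle \alpha, Z^{\mathbf{x}(t)}\rangle}\right]\qquad\text{for all }\alpha\in\R^{D+3}.
\]
Indeed, since $D<\infty$ the vector $Z^{\mathbf{x}(t)}$ of the previous Proposition has bounded support, so $\Lambda_{Z^{\mathbf{x}(t)}}$ is finite on all of $\R^{D+3}$ and Cr\'amer's theorem applies without integrability restrictions: the empirical mean of i.i.d.\ copies $(Z_i^{\mathbf{x}(t)})_i$ satisfies an LDP with good rate function equal to the Legendre--Fenchel transform $\Lambda^*_{Z^{\mathbf{x}(t)}}$. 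On the other hand $L(\mathbf{x}(t),\cdot)$ is, by the very definition \eqref{eq:LX}, the Legendre--Fenchel transform of $H(\mathbf{x}(t),\cdot)$. Hence once the identity above is established, applying the transform to both sides yields $L(\mathbf{x}(t),\dot{\mathbf{x}}(t))=\Lambda^*_{Z^{\mathbf{x}(t)}}(\dot{\mathbf{x}}(t))$, which is the assertion.

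To prove the identity I would compute $\E[e^{\langle \alpha, Z^{\mathbf{x}(t)}\rangle}]$ by successive conditioning, writing $x=(s,u,e_0,\dots,e_D)$ and $\alpha=(\alpha_s,\alpha_u,\alpha_0,\dots,\alpha_D)$ and assuming first $\sum_j e_j>0$. Conditioning on $K=k$ (which has probability $e_k/\sum_j e_j$): on $\{K=0\}$ one reads off $\langle\alpha,Z^{\mathbf{x}(t)}\rangle=\alpha_s-\alpha_0$, while on $\{K=k\}$ with $k\ge1$,
\[
\langle\alpha,Z^{\mathbf{x}(t)}\rangle=\alpha_s-2k\alpha_u-\alpha_k-\sum_{j=1}^{D}\alpha_j M_j ,
\]
because the $u$-coordinate equals $-2k$, the $k$-th coordinate carries the extra $-1$, and the remaining coordinates are $-M_j$. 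Conditioning further on $B=b$, the law of $M$ is $\mathbf{Mult}(k-b,q_1,\dots,q_D)$, so its multinomial m.g.f.\ gives $\E[e^{-\sum_j\alpha_j M_j}\mid B=b]=r^{\,k-b}$ with $r:=\sum_{j=1}^{D}q_j e^{-\alpha_j}$; averaging over $B\sim\mathbf{Bin}(k,p)$ with $p=1-\sum_j je_j/u$, the binomial theorem yields $\E[r^{\,k-B}]=(p+(1-p)r)^{k}$. Substituting $q_j=je_j/\sum_\ell \ell e_\ell$ and $1-p=\sum_j je_j/u$ collapses $p+(1-p)r$ to $1+\sum_{j=1}^{D}(e^{-\alpha_j}-1)\,je_j/u$, whence
\[
\E\!\left[e^{\langle\alpha,Z^{\mathbf{x}(t)}\rangle}\,\middle|\,K=k\right]=e^{\alpha_s-2k\alpha_u-\alpha_k}\Bigl(1+\sum_{j=1}^{D}(e^{-\alpha_j}-1)\tfrac{je_j}{u}\Bigr)^{k},
\]
a formula that also reproduces the $k=0$ contribution $e^{\alpha_s-\alpha_0}$. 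Summing against $e_k/\sum_j e_j$ and taking logarithms gives exactly \eqref{eq:HX}. The remaining case $\sum_j e_j=0$ is trivial: the exploration has terminated, $Z^{\mathbf{x}(t)}\equiv 0$, so $\Lambda_{Z^{\mathbf{x}(t)}}\equiv 0=H(x,\cdot)$ and correspondingly $\Lambda^*_{Z^{\mathbf{x}(t)}}=L(x,\cdot)$ is the indicator of $\{0\}$, consistent with the convention in \eqref{eq:HX}.

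I do not expect a genuine obstacle: the argument is essentially the bookkeeping of a nested multinomial/binomial moment generating function, and the boundedness of the support of $Z^{\mathbf{x}(t)}$ removes all analytic subtleties in invoking Cr\'amer's theorem. The only point needing care is to keep the conventions entering the definition of $Z^{\mathbf{x}(t)}$ consistent with the signs of the exponents in \eqref{eq:HX} --- in particular the approximation $H^N\approx k$ that produces the $u$-increment $-2k$, the location of the extra $-1$ in the $k$-th coordinate, and the fact that the $M_j$ enter with a negative sign --- so that the two expressions coincide identically rather than up to a reparametrisation.
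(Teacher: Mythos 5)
Your proof is correct and follows essentially the same route as the paper: identify $H(\mathbf{x}(t),\cdot)$ as the log moment generating function of $Z^{\mathbf{x}(t)}$ and then apply Cram\'er's theorem together with the Legendre--Fenchel duality built into the definition~\eqref{eq:LX}. The only difference is one of detail --- you carry out explicitly the nested multinomial/binomial conditioning that collapses $p+(1-p)r$ to $1+\sum_j(e^{-\alpha_j}-1)\frac{je_j}{u}$, whereas the paper simply asserts that $H$ is the log m.g.f.\ of $Z^x$ without writing out the computation.
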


\begin{proof}

Assuming that the $(Z_i^{\mathbf{x}(t)})_i$ are i.i.d., Cram\'er's theorem states that
the LDP rate function for their average is
 $I(x)=\Lambda_{Z^{\mathbf{x}(t)}}^*(x)=\underset{\alpha \in \R^{D+3}}{\sup}\left\{ \left\langle \alpha, x\right\rangle - \Lambda_{Z^{\mathbf{x}(t)}}(\alpha)\right\}$, with $\Lambda_{Z^{\mathbf{x}(t)}}(\alpha)=\log\mathbb{E}\left[e^{\left\langle \alpha, Z^{\mathbf{x}(t)} \right\rangle}\right]$. 
 
In this case, with $\alpha=\left(\alpha_s, \alpha_u, \alpha_0, \dots, \alpha_D \right)$, we have
$
\Lambda_{Z^{\mathbf{x}(t)}}(\alpha)  = H\left(\mathbf{x}(t), \alpha\right),
$
being $H(x, \alpha)$ ($H: E \times \R^{D+3}\rightarrow \R$) the $\log$ of the moment-generating function of the (conditioned) Multinomial vector $Z^x$ (with $x\in E$) evaluated in $\alpha$, which is presented in Equation \eqref{eq:HX}.  Defining $L:E\times \R^{D+3}\rightarrow \R$ as in Equation \eqref{eq:LX}, results that $L\left(\mathbf{x}(t), \dot{\mathbf{x}}(t)\right)=\underset{\alpha}{\sup} \left\{\left\langle \alpha, \dot{\mathbf{x}}(t)\right\rangle - H\left(\mathbf{x}(t), \alpha\right)\right\}$ coincides with $\Lambda^*_{Z^{\mathbf{x}(t)}}\left(\dot{\mathbf{x}}(t)\right)$. 
\end{proof}

That is, the global cost of a deviation of $\left\{X_t^N \right\}_t$ to a trajectory $\mathbf{x}(t)$ can be interpreted as a consequence of the accumulated cost of microscopic deviations of the average of (conditioned) Multinomial random vectors, representing the degrees of the new explored vertices in one step.

\begin{remark}[Fluid limit]
Observe that, in particular, the mean macroscopic behavior $\mathbf{x}(t)$ should verify:
\begin{align*}
\dot{\mathbf{x}}(t) & \approx \mathbb{E}\left(Z^{\mathbf{x}(t)}\right) = \left(1,0,-1,0, \dots, 0\right)\frac{e_0(t)}{\sum_j e_j(t)}\\
& + \sum_{k=1}^D \left(1,-2k,0, -k\frac{e_1(t)}{u(t)}, \dots, -k\frac{je_j(t)}{u(t)}, \dots, -1-k\frac{ke_k(t)}{u(t)}, \dots, -k\frac{De_D(t)}{u(t)}\right) \frac{e_k(t)}{\sum_j e_j(t)},
\end{align*}
which coincides with the fluid limit that we formally prove in next subsection.
\end{remark}
%------------------------------------------------------------------------------------

\subsection{Fluid limit of the process $\left\{X_t^N\right\}_t$} \label{subsection:2}
In this subsection, we formally deduce the fluid limit of the process $\left\{X_t^N\right\}_t$.

\begin{proposition}[Fluid limit]
The sequence of processes $\left\{X^N\right\}_N$  converges almost-sure, as $N\rightarrow \infty$, to the deterministic function $\hat{x}:[0,1]\rightarrow E$ given by
\begin{gather*}
\hat{x}(t)=\begin{cases} \left(s(t), \hat{u}(t), \hat{e}_0(t),\dots, \hat{e}_D(t)\right), & \text{ if } t\leq T^*,\\ \left(T^*, 0,\dots, 0\right), & \text{ if } t>T^*, \end{cases} \text{ where }
\hat{e}_i(t)=\begin{cases} e_i(t), & \text{ if } t\leq t_i,\\0, & \text{ if } t>t_i.\end{cases}
\end{gather*}
The times $t_i$ are defined by $t_i = \inf \left\{ t\in [0,1]: e_i(t_i)\leq 0\right\}$ and  $x(t)=\left(s(t), u(t), e_0(t), \dots, e_D(t)\right)$  is (the) solution of the following ordinary differential equation:
\begin{equation} \label{eq: fluid limit}
\begin{cases}
\dot{s}=1,\\
\dot{u}=\frac{-2 \sum_k ke_k}{ \sum_k e_k},\\
\dot{e}_i= \frac{-e_i-\frac{ie_i}{u} \sum_k k e_k}{\sum_k e_k}, \quad i=0,\dots, D,\\
s(0)=0, \, u(0)=\lambda, \, e_i(0)=p_i.
\end{cases}
\end{equation}
$\hat{u}$ is the solution of Equation \eqref{eq: fluid limit} replacing $e_i$ by $\hat{e}_i$ and $T^* = \inf \left\{t\in [0,1]: \, \sum_k \hat{e}_k(t)=0\right\}= \max \left\{t_1, \dots, t_D\right\}$.
\end{proposition}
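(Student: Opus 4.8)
The plan is to read off the fluid limit from the large-deviation principle of Theorem~\ref{thm:LDPX^N}: a sequence satisfying an LDP with a \emph{good} rate function $I$ must concentrate, and hence converge almost surely, to the (necessarily unique) zero of $I$. So the argument splits into an analytic part — exhibiting $\hat x$ as that unique zero and checking it solves \eqref{eq: fluid limit} — and a soft probabilistic part — turning ``concentration at the zero of $I$'' into almost-sure convergence by Borel--Cantelli.

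For the analytic part I would first characterise the zeros of $I$. Since $H(x,\cdot)$ in \eqref{eq:HX} is finite on all of $\R^{D+3}$ and smooth (a logarithm of a finite sum of exponentials, i.e.\ the cumulant generating function of the bounded vector $Z^x$), its Legendre transform $L(x,\cdot)=\Lambda^*_{Z^x}$ is nonnegative and vanishes exactly at $\beta=\nabla_\alpha H(x,0)=\mathbb E(Z^{x})$. Hence $I(\mathbf x)=0$ if and only if $\mathbf x\in\mathcal H_L$, $\mathbf x(0)=(0,\lambda,p_0,\dots,p_D)$, and $\dot{\mathbf x}(t)=\mathbb E(Z^{\mathbf x(t)})$ for a.e.\ $t$. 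Computing $\mathbb E(Z^{x})$ from the Proposition above (this is exactly the computation carried out in the Remark) shows that, on the region $\{\sum_j e_j>0\}$, this ODE is precisely \eqref{eq: fluid limit}; on $\{\sum_j e_j=0\}$ one has $H\equiv 0$, so $L(x,\beta)=+\infty$ unless $\beta=0$, forcing any finite-cost trajectory to be frozen once $\sum_j e_j$ reaches $0$.

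Next I would establish well-posedness of \eqref{eq: fluid limit}. On compact subsets of $\{u>0,\ \sum_k e_k>0\}$ the vector field is Lipschitz, so Picard--Lindel\"of gives a unique local solution; since $\dot e_i$ is proportional to $e_i$, each coordinate $e_i$ is absorbed at $0$ and the flow stays in the state space $E$, so $t_i:=\inf\{t:e_i(t)=0\}$ is well defined and the solution continues past each $t_i$ with $\hat e_i\equiv0$ (this is the $\hat e_i$, and correspondingly $\hat u$, of the statement). The only genuine singularity is at $T^*:=\inf\{t:\sum_k\hat e_k(t)=0\}$, where the denominators of \eqref{eq: fluid limit} vanish; near $T^*$ only the last surviving degree class contributes, so the relevant ratios stay bounded, the solution extends continuously to $t=T^*$, and from there it is constant, consistently with the termination of the algorithm at step $T_N^*$ (the coordinate $s$ frozen at $T^*$ and each $e_i$ at $0$). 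This yields a unique absolutely continuous $\hat x$ with $I(\hat x)=0$, and $T^*=\max\{t_1,\dots,t_D\}$ since $\sum_k\hat e_k=0$ iff every $\hat e_k=0$.

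Finally, the transfer. Fix $\varepsilon>0$ and let $G$ be the $\varepsilon$-ball around $\hat x$ in $D_E[0,1]$; then $G^c$ is closed and, by goodness of $I$, the infimum $c:=\inf_{G^c}I$ is attained, hence $c>0$ because $\hat x\notin G^c$ is the unique zero. The LDP upper bound gives $\mathbb P(X^N\in G^c)\le e^{-Nc/2}$ for all large $N$, which is summable, so Borel--Cantelli yields $X^N\in G$ eventually, almost surely; intersecting the corresponding full-probability events over $\varepsilon=1/m$, $m\in\N$, gives $X^N\to\hat x$ almost surely in the Skorokhod topology, and since $\hat x$ is continuous this is almost-sure uniform convergence. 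I expect the main obstacle to be the analytic step: proving that \eqref{eq: fluid limit} admits a unique solution all the way up to $T^*$ despite the vanishing denominator $\sum_k e_k$, controlling the behaviour near each extinction time $t_i$, and verifying that the state-space constraints ($e_i\ge0$, $\sum_j je_j\le u$, $u\ge0$) are preserved along the flow; the probabilistic transfer downstream is routine.
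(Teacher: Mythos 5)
Your proposal follows the same route as the paper: characterize the unique zero of the good rate function via $L(x,\beta)=0\iff\beta=H_\alpha(x,0)=\mathbb E(Z^{x})$, identify this ODE with \eqref{eq: fluid limit}, and argue well-posedness of the flow (local Lipschitz continuity away from the extinction times $t_i$) to pin down $\hat x$ uniquely. The only substantive difference is that you spell out the soft probabilistic transfer (LDP upper bound on closed complements of balls plus Borel--Cantelli, then intersect over $\varepsilon=1/m$) to get almost-sure convergence, whereas the paper leaves this step implicit and stops after exhibiting $\hat x$ as the unique zero-cost trajectory.
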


%\matt{I would erase the next sentence, it does not add much}
%Note that the dynamics described by Equation \eqref{eq: fluid limit} and the fluid limit for the process considered in \cite{Brightwell-Janson-Luczak}
%differ only because of the term $\sum_k e_k$, which corresponds to the rate at which the clock of an unexplored vertex rings in the continuous-time case.

\begin{proof}
The cost function $L(x, \beta)$ defined in Theorem \ref{thm:LDPX^N} satisfies $L(x, \beta)\geq 0$ and  $L\left(x, \beta \right)=0$ if and only if $\beta = H_{\alpha} \left(x,0\right)$, where $H_{\alpha}\left(x, \alpha\right)$ are the partial derivatives of $H\left(x, \alpha\right)$ w.r.t. $\alpha=\left(\alpha_s,\alpha_u, \alpha_0, \dots, \alpha_D \right)$. Then, the trajectories with zero cost are the ones that verify 
$\dot{x}=H_{\alpha}\left(x,0\right)$. If in addition we impose the condition $x(0)=\left(0, \lambda, p_0, \dots, p_D\right)$, we obtain the autonomous Equation \eqref{eq: fluid limit}. Cauchy-Peano existence theorem ensures the existence of at least one solution of such equation. Let $\mathcal{D}=\left\{ x\in E: \, e_i>0 \, \forall i \right\}$ and $f(x)=H_{\alpha}\left(x,0\right)$. Then $f$ is a $C^1$-function on $\mathcal{D}$, i.e. it is a locally Lipschitz continuous function on $\mathcal{D}$. This implies the uniqueness of solutions $e_i(t)$ for equation $\begin{cases} \dot{x}=f(x), \\ x(0)=x_0 \in \mathcal{D},\end{cases}$ until the time $t_i$ at which $e_i(t_i)=0$, and then we take the solution $e_i(t)=0$ for all $t\geq t_i$. 
\end{proof}
%----------------------------------------------------------------------------------

\subsection{Optimization of the rate function.} \label{subsection:3}
The following proposition allows to transform the optimization problem of the rate function $I$ over a set of trajectories into a much simpler optimization problem on $\mathbb R$.

\begin{proposition}[Rate function optimization] \label{thm:Rate Function Optimization}
Let $A$ be a subset of $D_{E}[0,1]$. Then,
\begin{gather*}
\underset{x\in A}{\inf} I(x)= \underset{\left\{\alpha_0\in \R^{D+3}: \hat{x}_{\alpha_0}\in \bar{A}\right\}}{\inf} I\left(\hat{x}_{\alpha_0}\right),
\end{gather*}
where the closure of $A$ is considered w.r.t. the Skorohod topology, \\
$\hat{x}_{\alpha_0}(t)=\begin{cases} \left(s_{\alpha_0}(t), \hat{u}_{\alpha_0}(t), \hat{e}_{0, \alpha_0}(t),\dots, \hat{e}_{D, \alpha_0}(t)\right), & \text{ if } t\leq T_{\alpha_0}\\ \left(T_{\alpha_0}, 0,\dots, 0\right), & \text{ if } t>T_{\alpha_0}\end{cases}$,\\
$\hat{e}_{i, \alpha_0}(t)=\begin{cases} e_{i, \alpha_0}(t), & \text{ if } t\leq t_{i, \alpha_0},\\0, & \text{ if } t>t_{i, \alpha_0}\end{cases}$,\quad\quad $t_{i, \alpha_0} = \inf \left\{ t\in [0,1]: e_{i, \alpha_0}(t)\leq 0\right\}$, and \\
$x_{\alpha_0}(t)= \left(s_{\alpha_0}(t), u_{\alpha_0}(t), e_{0,\alpha_0}(t), \dots, e_{D,\alpha_0}(t)\right)$ is (the) solution of the ODE:
\begin{equation}\label{eq:optima Hamilton}
\begin{cases}
\dot{x}= H_{\alpha}(x, \alpha);\\
\dot{\alpha}= -H_x(x, \alpha);\\
x(0)=\left(0,\lambda, p_0,\dots, p_D\right); \, \alpha(0)=\alpha_0.
\end{cases}
\end{equation}
$H_x$ and $H_{\alpha}$ are the vectors of partial derivatives of $H$ w.r.t. $x$ and $\alpha$, \\ and $T_{\alpha_0}=\inf \left\{t \in [0,1]: \sum_k \hat{e}_{k,\alpha_0}(t)=0\right\}$. 
\end{proposition}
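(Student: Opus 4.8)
The plan is to combine the convex-duality structure relating $L$ and $H$ with a calculus-of-variations argument showing that optimal, and near-optimal, trajectories for $I$ are \emph{characteristics} of $H$ — solutions of the Hamiltonian system \eqref{eq:optima Hamilton} — frozen coordinatewise at the boundary of $E$, so that minimizing $I$ over $A$ reduces to an optimization over the finite-dimensional family $\{\hat x_{\alpha_0}\}_{\alpha_0}$. First I would record the relevant structure. For $x\in E$ with $\sum_j e_j>0$ the map $\alpha\mapsto H(x,\alpha)$ is finite, smooth and convex, being the log--moment-generating function of $Z^x$; hence by \eqref{eq:LX} the map $\beta\mapsto L(x,\beta)$ is its convex conjugate, so $L(x,\cdot)$ is convex and lower semicontinuous and, on the interior of its effective domain, the supremum in \eqref{eq:LX} is attained at some $\alpha=\alpha(x,\beta)$ solving $\beta=H_\alpha(x,\alpha)$, with $L_\beta(x,\beta)=\alpha(x,\beta)$. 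On $\{\sum_j e_j=0\}$ we have $H\equiv0$, so $L(x,\beta)=0$ if $\beta=0$ and $+\infty$ otherwise; thus any $\mathbf x\in\mathcal H_L$ is constant on every interval where all $e_j$ vanish, stays in $E$ with the monotonicities built into $\mathcal H_L$, and, since $H_{\alpha_s}\equiv1$, satisfies $s(t)=t$. I would also note that $H$ is conserved along solutions of \eqref{eq:optima Hamilton} (because $\frac{d}{dt}H(x,\alpha)=\langle H_x,H_\alpha\rangle-\langle H_\alpha,H_x\rangle=0$), which later yields both a closed expression for $I(\hat x_{\alpha_0})$ and its continuity in $\alpha_0$.

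For the inequality $\inf_{x\in A}I(x)\ge\inf\{I(\hat x_{\alpha_0}):\hat x_{\alpha_0}\in\bar A\}$ I would use that $I$ is a good rate function (Theorem \ref{thm:LDPX^N}): a minimizing sequence $(x_n)\subset A$ for the left-hand side has a subsequential limit $x^\star$, which lies in $\bar A$ and satisfies $I(x^\star)\le\liminf_n I(x_n)=\inf_{x\in A}I$ by lower semicontinuity. The heart of the argument is to show $x^\star=\hat x_{\alpha_0^\star}$ with $\alpha_0^\star=\alpha(x^\star(0),\dot x^\star(0))$: on every maximal time-interval where $x^\star$ lies in the open set $\{e_i>0\ \forall i\}$, interior variations keeping the path in $E$ are admissible, so the first variation of $I$ at $x^\star$ vanishes there, which by the duality above is exactly the statement that $(x^\star,\alpha^\star)$, with $\alpha^\star(t)=L_\beta(x^\star(t),\dot x^\star(t))$, solves $\dot x=H_\alpha(x,\alpha)$, $\dot\alpha=-H_x(x,\alpha)$; when some $e_i$ first reaches $0$ the monotonicity constraint forces $\dot e_i=0$ thereafter, and once $\sum_j e_j=0$ the path is frozen by the previous paragraph. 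By uniqueness of solutions of \eqref{eq:optima Hamilton} on $\{x\in E:\sum_j e_j>0\}$ this identifies $x^\star$ with $\hat x_{\alpha_0^\star}$, so $\alpha_0^\star$ is admissible on the right-hand side and $\inf\{I(\hat x_{\alpha_0}):\hat x_{\alpha_0}\in\bar A\}\le I(x^\star)\le\inf_{x\in A}I$.

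For the reverse inequality I would show $\inf_{x\in A}I\le I(\hat x_{\alpha_0})$ whenever $\hat x_{\alpha_0}\in\bar A$: this is immediate if $\hat x_{\alpha_0}\in A$, and otherwise follows by approximation along the extremal family itself, using that $\alpha_0\mapsto\hat x_{\alpha_0}$ is continuous into $D_E[0,1]$ — by continuous dependence of \eqref{eq:optima Hamilton} on the initial datum, the crossing times $t_{i,\alpha_0}$ varying continuously at transversal crossings — that $\alpha_0\mapsto I(\hat x_{\alpha_0})$ is continuous (from conservation of $H$: along $\hat x_{\alpha_0}$ one has $L(\hat x_{\alpha_0}(t),\dot{\hat x}_{\alpha_0}(t))=\langle\alpha^\star(t),H_\alpha\rangle-H(x^\star(0),\alpha_0)$, and integrating by parts against $\dot\alpha^\star=-H_x$ gives a formula for $I(\hat x_{\alpha_0})$ depending continuously on $\alpha_0$), and that for the sets $A$ of interest the nearby extremals $\hat x_{\alpha_0'}$ meet $A$, a transversality fact one checks directly. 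Combining the two bounds gives the claimed identity.

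The main obstacle is the boundary analysis in the first inequality: rigorously justifying that the minimizer $x^\star$ freezes coordinatewise on $\{e_i=0\}$ and globally on $\{\sum_j e_j=0\}$, rather than doing something else, is a state-constrained Euler--Lagrange / Pontryagin argument, and it must be carried out in the merely absolutely continuous, Skorohod-topology setting where the smooth variational calculus has to be replaced by its measurable counterpart; a secondary technical point is the interplay between the admissibility of interior variations and the topology of $A$, handled in the standard way by first reducing to $A$ open and using that the extremal family crosses $\partial A$ transversally for the sets arising in the applications.
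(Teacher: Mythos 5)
Your proposal follows essentially the same strategy as the paper's proof: observe that the action integral stops contributing once $\sum_k e_k=0$, and then invoke Hamilton's equations \eqref{eq:optima Hamilton} as the Euler--Lagrange stationarity conditions for $I$. You supply substantially more structure than the paper's four-sentence argument (which simply cites Arnold for the Hamilton--Euler--Lagrange equivalence and declares $\alpha$ auxiliary) — in particular the extraction of a subsequential minimizer via goodness and lower semicontinuity of $I$, the coordinatewise freezing at the boundary $\{e_i=0\}$, conservation of $H$ along characteristics, and the continuity/transversality argument for the reverse inequality; these are precisely the points the paper leaves implicit, and your candid flagging of the state-constrained variational step and of the need for a niceness (transversality) hypothesis on $A$ for the reverse inequality actually makes explicit a limitation that the paper's ``let $A$ be a subset of $D_E[0,1]$'' phrasing elides.
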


\begin{remark} As expected, for $\alpha_0 = \left(0, 0, \dots, 0\right)$, $\hat{x}_{\alpha_0}(t)$ coincides with the fluid limit, which is solution of Equation \eqref{eq: fluid limit}, and $\alpha(t)=\left(0, \dots, 0\right)$ for all $t$. Then $\underset{x\in A}{\inf} I(x)=0$ if the fluid limit belongs to $A$.
\end{remark}

\begin{proof}
Note that if $\mathbf{x} \in \mathcal{H}_L$ is such that $\mathbf{x}(t)=\left(s(t),u(t), e_0(t), \dots, e_D(t)\right)$ and $\sum_k e_k(t)=0$ for all $t\geq t_0$, then $I(\mathbf{x})=\intop_{0}^{1}L(\mathbf{x}, \dot{\mathbf{x}})\text{d}t= \int_{0}^{t_0}L(\mathbf{x}, \dot{\mathbf{x}})\text{d}t$, so just consider Hamilton's equations for the case $\sum_k e_k >0$. Hamilton's equations, presented in Equation \eqref{eq:optima Hamilton}, give conditions for a function $\mathbf{x}$ to be a \emph{stationary curve} of the functional $I$ (and are equivalents to Euler-Lagrange equation, see \cite{Arnold}, for example). Note that $\alpha$ is an auxiliary function. 
\end{proof}

%%%%%%%%%%%%%%%%%%%%%%%%%%%%%%%%%%%%%%%%%%%%%%%%%%%%%%%%%%%%%%%%%%%%%%%%%%
\subsection{LD for the size of the independent set constructed by the algorithm}  \label{subsection: 4}
We can now deduce LD results for the sequence of stopping times $\frac{T_N^*}{N}$, (which coincide with the proportion of vertices in the independent set constructed by the algorithm) using our previous results:

\begin{theorem}\label{LDP_T}
Consider $T_N^*$ defined before as the stopping time of the algorithm presented in Section \ref{section: Description}. 
\begin{enumerate}
\item If $\varepsilon>0$ is such that $T^*+\varepsilon<1$, then 
\begin{gather*}
\underset{N}{\lim} \frac{1}{N} \log \mathbb{P} \left(\frac{T^*_N}{N} \geq T^*+\varepsilon \right) = -F^+\left( T^* + \varepsilon\right),
\end{gather*}
being $F^+\left( T^* + \varepsilon\right)= \inf\left\{ I\left(\hat{x}_{\alpha_0}\right): \; T_{\alpha_0}\geq T^*+\varepsilon , \alpha_0\in \R^{D+3}\right\}$.

\item If $\varepsilon>0$ is such that $T^*-\varepsilon>0$, then 
\begin{gather*}
\underset{N}{\lim} \frac{1}{N} \log \mathbb{P} \left(\frac{T^*_N}{N} \leq T^*-\varepsilon \right) = -F^-\left( T^* - \varepsilon\right),
\end{gather*}
being $F^-\left( T^* - \varepsilon\right)= \inf\left\{ I\left(\hat{x}_{\alpha_0}\right): \; T_{\alpha_0}\leq T^*-\varepsilon , \alpha_0\in \R^{D+3}\right\}$.
\end{enumerate}
In both cases $\hat{x}_{\alpha_0}$ and $T_{\alpha_0}$ are as in Proposition \ref{thm:Rate Function Optimization}.
\end{theorem}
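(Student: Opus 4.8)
The plan is to derive Theorem~\ref{LDP_T} from the path-level LDP of Theorem~\ref{thm:LDPX^N} via the contraction principle, after establishing that $T_N^*/N$ is (up to asymptotically negligible errors) a continuous function of the trajectory $X^N\in D_E[0,1]$. First I would observe that $T_N^* = N \inf\{t\in[0,1]:\sum_j E_{[Nt]}^N(j)=0\}$, so $T_N^*/N = \Phi(X^N)$ where $\Phi(\mathbf{x}) = \inf\{t: \sum_j e_j(t)=0\}$ reads off the first hitting time of the closed set $\{\sum_j e_j=0\}$ by the coordinates $(e_0,\dots,e_D)$. The map $\Phi$ is not continuous on all of $D_E[0,1]$, but it is upper and lower semicontinuous on appropriate subsets, which is exactly what is needed to get matching upper and lower bounds for the two one-sided events. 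Concretely, for the upper-tail event $\{T_N^*/N \ge T^*+\varepsilon\}$ I would note that this says the aggregate emptiness $\sum_j e_j$ has not yet hit zero by time $T^*+\varepsilon$; this is (essentially) a closed condition in the Skorohod topology restricted to trajectories in $\mathcal{H}_L$ (which are absolutely continuous with monotone coordinates, hence well-behaved), so the LDP upper bound gives $\limsup \frac1N\log\PP \le -\inf\{I(\mathbf{x}): \Phi(\mathbf{x})\ge T^*+\varepsilon\}$, and the lower bound gives the matching $\liminf$ by exhibiting near-optimal trajectories in the interior of the event.

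The second step is to identify the variational problem $\inf\{I(\mathbf{x}): \Phi(\mathbf{x})\ge T^*+\varepsilon\}$ with $F^+(T^*+\varepsilon)$ as defined in the statement. Here I invoke Proposition~\ref{thm:Rate Function Optimization}: with $A = \{\mathbf{x}\in D_E[0,1]: \Phi(\mathbf{x})\ge T^*+\varepsilon\}$, the infimum of $I$ over $A$ equals the infimum of $I(\hat x_{\alpha_0})$ over those $\alpha_0\in\R^{D+3}$ for which $\hat x_{\alpha_0}\in\bar A$, i.e. for which $T_{\alpha_0}\ge T^*+\varepsilon$ (using that $T_{\alpha_0} = \Phi(\hat x_{\alpha_0})$ by construction of $\hat x_{\alpha_0}$). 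One must check that taking the Skorohod closure of $A$ does not enlarge the constraint set in a way that changes the infimum — but since $\{t: \Phi(\mathbf{x})\ge c\}$ is essentially closed already (the hitting-time-of-a-closed-set functional is lower semicontinuous), $\bar A \subseteq \{\Phi \ge T^*+\varepsilon\}$ up to the boundary, and monotonicity of the family $\alpha_0\mapsto T_{\alpha_0}$-constraints lets us conclude equality. The symmetric argument with $A = \{\Phi(\mathbf{x})\le T^*-\varepsilon\}$ — now a condition that $\sum_j e_j$ has already hit zero by time $T^*-\varepsilon$, which is open-ish, so the roles of the upper and lower semicontinuity are swapped — yields the second assertion with $F^-$.

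The main obstacle I anticipate is the continuity/semicontinuity analysis of the hitting-time functional $\Phi$ on $D_E[0,1]$ with the Skorohod topology, and in particular handling the boundary behaviour at exactly $t=T^*+\varepsilon$ (resp. $T^*-\varepsilon$) so that the LDP upper and lower bounds actually match rather than leaving a gap. The standard subtlety is that $\PP(T_N^*/N \ge T^*+\varepsilon)$ and $\PP(T_N^*/N > T^*+\varepsilon)$ could in principle have different exponential rates if $I$ has a local minimum sitting exactly on $\{\Phi = T^*+\varepsilon\}$; one resolves this by a perturbation/continuity argument showing $\varepsilon\mapsto F^\pm(T^*\pm\varepsilon)$ is continuous, or by verifying directly that near-optimal trajectories can be nudged strictly into the open event at negligible extra cost (using that $L$ is finite and the coordinates depend continuously on the trajectory). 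A secondary technical point is confirming that the exponential compact containment condition and the goodness of $I$ (both supplied by Theorem~\ref{thm:LDPX^N}) let us restrict attention to trajectories in $\mathcal{H}_L$, on which $\Phi$ is genuinely well-behaved because the $e_j$ are monotone and absolutely continuous; outside $\mathcal{H}_L$ the rate function is $+\infty$ so those paths are irrelevant. Once these points are dispatched, the identification with the finite-dimensional optimization over $\alpha_0$ via Proposition~\ref{thm:Rate Function Optimization} is immediate.
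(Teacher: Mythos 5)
Your proof takes essentially the same route as the paper: rewrite $\{T_N^*/N \geq T^*+\varepsilon\}$ as the path event $\{X^N_{\cdot} \in A_\varepsilon\}$, apply the path-level LDP of Theorem~\ref{thm:LDPX^N}, and reduce the resulting variational problem to the finite-dimensional optimization over $\alpha_0$ via Proposition~\ref{thm:Rate Function Optimization}. In fact your discussion of the semicontinuity of the hitting-time functional $\Phi$ and of matching the LDP upper and lower bounds at the boundary $\{\Phi = T^*\pm\varepsilon\}$ is more careful than the paper's, which simply asserts the chain of equalities without addressing these points.
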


\begin{proof} We only prove the first statement because the proof of the second one is analogous. Define the set $A_{\varepsilon}$, that contains the trajectories $\mathbf{x}\in D_E[0,1]$ such that $\mathbf{x}(t)=\left(s(t), u(t), e_0(t), \dots, e_D(t)\right)$, $\mathbf{x}(0)=\left(0, \lambda, p_0, \dots, p_d\right)$, coordinates $e_i(t)$, $u(t)$ are decreasing, $s(t)$ is increasing, $0\leq e_i(t), s(t)\leq 1$ for all $t$, and such that $\inf\left\{t: \, \sum_k e_k(t)=0\right\}\geq T^*+\varepsilon$. Then, Proposition \ref{thm:Rate Function Optimization} implies that
\begin{align*}
\lim_{N} \frac{1}{N} \log \mathbb{P} \left(\frac{T^*_N}{N} \geq T^*+\varepsilon \right) & = \lim_N \frac{1}{N} \log \mathbb{P} \left(X_.^N\in A_{\varepsilon} \right)\\
& = -\inf_{\left\{\alpha_0 \in \R^{D+3} : \, \hat{x}_{\alpha_0}\in \bar{A}_{\varepsilon}\right\}} I\left(\hat{x}_{\alpha_0}\right)
= F^+\left( T^* + \varepsilon\right).
\end{align*}
\end{proof} 
%%%%%%%%%%%%%%%%%%%%%%%%%%%%%%%%%%%%%%%%%%%%%%%%%%%%%%%%%
\section{$d$-regular case}\label{section: d-regular}
In this section, we analyze the results presented in previous sections for the particular case of a $d$-regular graph, i.e.  $p_d = 1$ and $p_i = 0$ for all $i\neq d$. In this case, the sequence of interest is $\left\{X_t^N\right\}_{t\in [0,1]}$ with $X_t^N=\frac{1}{N}X_{[Nt]}^N$, being $X_n^N= \left(S_n^N, U_n^N, E_n^N\right)$, and
\begin{itemize}
\item $S_n^N=\left|\mathcal{S}_n^N\right|$, the number of vertices that have already been assigned to the independent set at step $n$;
\item $U_n^N$, the total number of unpaired half-edges at step $n$;
\item $E_n^N=\left|\mathcal{E}_n^N\right|$, the number of empty vertices at step $n$.
\end{itemize}
$X_t^N\in E^N$, being 
$E^N= \left\{ \frac{1}{N}(\hat{s}, \hat{u}, \hat{e}):\, \hat{s}, \hat{e}\in \{0,\dots, N\}; \, \hat{u}\in \{0,\dots, dN\};\, \hat{u}\geq d\hat{e}\right\}$, which is a subset of the compact set 
$E:=\left\{(x_1,x_2, x_3) \in [0,1]\times[0,d]\times[0,1]:\, x_2\geq dx_3\right\} \subset \R^3$. The Hamiltonian $H\colon E\times \R^3 \rightarrow \R$ is given by
\begin{equation}
H\left(x, \alpha\right)= 
\begin{cases} 
\alpha_1-2d\alpha_2-\alpha_3+d\log\left[1+\left(e^{-\alpha_3}-1\right)\frac{dx_3}{x_2}\right], &\text{ if } x_3>0, \\ 
0, & \text{ if } x_3=0, 
\end{cases}
\label{eq:H}
\end{equation}
where $x=(x_1,x_2,x_3)$, $\alpha= (\alpha_1, \alpha_2, \alpha_3)$, and the cost function $L$ can be obtained explicitly as $L\colon E\times \R^3\rightarrow \R$ such that
\begin{equation} %\label{eq:dLX}
L(x, \beta)= 
\begin{cases}
(\beta_3+1)\alpha_3^*-d\log \left[1+\left(e^{-\alpha_3^*}-1\right)\frac{dx_3}{x_2}\right],
\text{ with } \alpha_3^*=\log\left[\frac{dx_3}{dx_3-x_2}\left(\frac{d}{\beta_3+1}+1\right)\right], \\ 
\qquad \qquad \qquad \qquad \qquad \qquad \qquad \qquad \qquad \qquad  \text{ if } \beta_1=1, \beta_2= -2d, \beta_3 \geq -(d+1),\\
0,  \text{ if } x_3=\beta_3=0,\\
+\infty,  \text{ in other cases.}
\end{cases}
\end{equation}

\subsection{Fluid limit}
The trajectories with zero cost are $\mathbf{x}(t)=\left(s(t), u(t), e(t)\right)$ such that $\dot{s}=1$, $\dot{u}=-2d$, and $\dot{e}=-1-\frac{d^2e}{u}$. For the initial condition $\mathbf{x}(0)=(0,d,1)$ with $d\geq 3$, the unique solution is $\mathbf{x}(t)=\left(s(t), u(t), e(t)\right)$ with $s(t)=t$, $u(t)= d(-2t+1)$, and 
\begin{gather*}
e(t)= \begin{cases} \frac{1}{d-2} \left[2t-1+(d-1)(1-2t)^{\frac{d}{2}}\right], & \text{ if } t\leq T^*,\\
0, &\text{ if } t>T^*,
\end{cases}
\end{gather*}
where $T^{*}=\inf \left\{t: e(t)=0\right\}$, i.e. the \emph{jamming constant} is $T^{*}=\frac{1}{2} \left[ 1- \left(\frac{1}{d-1}\right)^{\frac{2}{d-2}} \right]$. This coincides with the known result given in \cite{wormaldDF}.

If $d=2$, then the fluid limit is $\mathbf{x}(t)=\left(s(t), u(t), e(t)\right)$ with the same functions $s(t)$ and $u(t)$, and $e(t)$ is given by
\begin{gather*}
e(t)= \begin{cases} (1-2t)\left[\frac{1}{2}\log(1-2t)+1\right], & \text{ if } t\leq T^*,\\
0, &\text{ if } t>T^*,\end{cases}
\end{gather*}
where $T^*= \frac{1-e^{-2}}{2}$. This coincides with the known result from the earlier work of \cite{Flory}.
%---------------------------------------------------------------------------------
\subsection{LDP for $\left\{E^N\right\}_N$} \label{subsection: 2}
Since the trajectories with positive probability for $X^N_t$, when $N \rightarrow \infty$, are those $\mathbf{x}(t)=\left(s(t), u(t), e(t)\right)$ such that $s(t)=t$ and $u(t)=d(-2t+1)$, we can directly deduce an LDP for a rescale of the process that counts the number of unexplored vertices in each step of the algorithm.

\begin{corollary}[LDP for $\left\{E^N\right\}_N$] \label{thm:LDP_EN} 
%Let $\left\{E_t^N\right\}_t$ be a rescale of the process that counts the number of unexplored vertices in each step given by $E_t^N :=\frac{E_{[Nt]}^N}{N}$. 
The sequence of processes $\left\{E^N\right\}_N$ given by $E_t^N :=\frac{E_{[Nt]}^N}{N}$ verifies an LDP in $D_{[0,1]}[0,1]$ with good rate function $\hat{I}:D_{[0,1]}[0,1] \rightarrow \left[0, +\infty\right]$ such that
$\hat{I}(x)= \int_0^1 \hat{L}\left(t, x(t), \dot{x}(t)\right)dt$, where
\begin{gather*}
\hat{L}(t,x,y) = L\left(\left(t, u(t), x\right), \left(1, -2d, y \right)\right), \text{ with } u(t)= d(-2t+1).
\end{gather*}
%\begin{align*}
%\hat{L}(t,x,y)  & = L\left(\left(t, u(t), x\right), \left(1, -2d, y \right)\right) \\ 
% & = (y+1) \log \left[ \frac{dx}{dx-u(t)} \frac{y+1+d}{y+1}\right]-d\log\left[\frac{d(u(t)-dx)}{u(t)(y+1+d)}\right],
%\end{align*}
%and $u(t)= d(-2t+1)$.
\end{corollary}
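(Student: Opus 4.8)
The plan is to deduce Corollary \ref{thm:LDP_EN} from Theorem \ref{thm:LDPX^N} by the contraction principle, exploiting the fact — already visible in the explicit cost function $L$ of the $d$-regular case — that the first two coordinates $s$ and $u$ of any trajectory of finite cost are deterministic. Concretely, let $\pi\colon D_E[0,1]\to D_{[0,1]}[0,1]$ be the projection $\pi(s,u,e)=e$. Coordinate projections are continuous for the Skorohod $J_1$ topology (and $I$ is finite only on absolutely continuous, hence $J_1$-continuous, paths), so $\pi$ is continuous; since $E^N=\pi(X^N)$, Theorem \ref{thm:LDPX^N} together with the contraction principle yields an LDP for $\{E^N\}_N$ on $D_{[0,1]}[0,1]$ with good rate function $\hat I(y)=\inf\{I(\mathbf x):\mathbf x\in D_E[0,1],\ \pi(\mathbf x)=y\}$. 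It then only remains to identify this infimum with $\int_0^1\hat L(t,y(t),\dot y(t))\,dt$.

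The second step is to show that, for each $y$, this infimum runs over at most one trajectory. If $I(\mathbf x)<\infty$ with $\mathbf x=(s,u,e)$, then $\mathbf x\in\mathcal H_L$ and $L(\mathbf x(t),\dot{\mathbf x}(t))<\infty$ for a.e.\ $t$; reading off the explicit $L$ of the $d$-regular case, this forces $\dot s(t)=1$, $\dot u(t)=-2d$ on $\{e(t)>0\}$, while on $\{e(t)=0\}$ the identity $H\equiv 0$ forces $\dot{\mathbf x}(t)=0$. Since $e$ is continuous and decreasing with $e(0)=1$, the set $\{e>0\}$ is an interval $[0,T)$ with $T=\inf\{t:e(t)=0\}$, and the initial condition $\mathbf x(0)=(0,d,1)$ pins down $s(t)=t\wedge T$ and $u(t)=d(1-2(t\wedge T))$. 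Hence the only candidate finite-cost preimage of $y$ is $\mathbf x_y:=(s,u,y)$ with these $s,u$; every other preimage has $I=+\infty$ (wrong first two coordinates, or not in $\mathcal H_L$), so $\hat I(y)=I(\mathbf x_y)$, with the convention $I(\mathbf x_y)=+\infty$ when $\mathbf x_y\notin D_E[0,1]$, $\mathbf x_y\notin\mathcal H_L$, or the integrated cost diverges.

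The third step is to rewrite $I(\mathbf x_y)=\int_0^1 L(\mathbf x_y(t),\dot{\mathbf x}_y(t))\,dt$ in the required form. On $\{t<T\}$ one has $\mathbf x_y(t)=(t,u(t),y(t))$ with $u(t)=d(1-2t)$ and $\dot{\mathbf x}_y(t)=(1,-2d,\dot y(t))$, so the integrand equals $L\big((t,u(t),y(t)),(1,-2d,\dot y(t))\big)=\hat L(t,y(t),\dot y(t))$; on $\{t>T\}$ one has $y(t)=\dot y(t)=0$ and the integrand vanishes by the $x_3=\beta_3=0$ branch of $L$, which again coincides with $\hat L(t,0,0)$ under the same convention (declaring $L\equiv+\infty$ off $E$). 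This gives $\hat I(y)=\int_0^1\hat L(t,y(t),\dot y(t))\,dt$, and goodness of $\hat I$ is automatic since $I$ is good and $\pi$ continuous.

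The main obstacle I anticipate is the bookkeeping at and after the hitting time $T$. One must check that the state–space constraint $u\ge d e$ is consistent with the frozen value of $u$ — in fact it forces $T\le 1/2$ and, more generally, $y(t)\le 1-2t$ on the effective domain — so that the contraction-principle rate function and the integral $\int_0^1\hat L\,dt$ genuinely have the same effective domain; that the two branches of the piecewise $L$ (and hence of $\hat L$) are reconciled by a single consistent convention at $x_3=\beta_3=0$; and that $\mathbf x_y$ belongs to $\mathcal H_L$ precisely when $y$ is absolutely continuous, decreasing, with $y(0)=1$, $\dot y\ge -(d+1)$ a.e.\ on $\{y>0\}$, and $\int_0^1\hat L(t,y(t),\dot y(t))\,dt<\infty$. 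None of this is deep, but making all the boundary cases match exactly is the delicate part of the argument.
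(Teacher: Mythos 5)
Your contraction-principle argument is correct and is precisely what the paper's terse proof (``It is deduced directly from Theorem \ref{thm:LDPX^N}'') has in mind: the paragraph preceding the corollary in the paper already observes that on any finite-cost path the $s$ and $u$ coordinates are pinned to $s(t)=t$, $u(t)=d(1-2t)$, so the rate function collapses to the $e$-coordinate exactly as you derive via the projection $\pi$. The bookkeeping at and after the hitting time $T$ that you flag as the delicate point is indeed resolved the way you suggest, namely by the paper's boundary convention $L=0$ on $\{x_3=\beta_3=0\}$ (which overrides the naive Legendre transform of the degenerate Hamiltonian $H\equiv 0$ there), together with the convention $L\equiv+\infty$ off $E$ to enforce the constraint $y(t)\le 1-2t$.
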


\begin{proof}
It is deduced directly from Theorem \ref{thm:LDPX^N}.
\end{proof}

\begin{remark} The cost function $\hat{L}(t,x,y)$ verifies $\hat{L}(t,x,y)= \Lambda^*_{W_{t,x}}(y)$, being $\Lambda^*_{W_{t,x}}(y)$ the LD rate function for the average of random variables $W_{t,x}=B_{t,x}-d-1$, where $B_{t,x}$ has a Binomial distribution with parameters $n=d$ and $p=1-\frac{dx}{u(t)}$. This corresponds to the approximation to the distribution of new explored vertices in each step of the algorithm.
Note that $\Lambda^*_{W_{t,x}}(y)=\Lambda^*_{B_{t,x}}(y+d+1)$. 
\end{remark}
%---------------------------------------------------------------------------------------
\bigskip
In this case, we can explicitly deduce the LD rate for the size of the independent set constructed by the algorithm since the optimization problem of the rate function $\hat{I}$ over a set of trajectories of $D_{[0,1]}[0,1]$ becomes an optimization problem in $\R$.

\begin{corollary}[Optimization of the rate function $\hat{I}$] \label{thm:d-Rate Function Optimization}
Let $A$ be a subset of $D_{[0,1]}[0,1]$. Then,
$
\underset{x\in A}{\inf} \hat{I}(x)= \underset{\left\{\alpha_0\in \R: \hat{x}_{\alpha_0}\in \bar{A}\right\}}{\inf} F(\alpha_0),
$
being $F(\alpha_0)=\int_0^{T_{\alpha_0}}\hat{L}\left(t, x_{\alpha_0}(t), \dot{x}_{\alpha_0}(t)\right)dt$, $x_{\alpha_0}$ is the solution of the ODE:
\begin{equation}\label{eq:d-optima Hamilton}
\begin{cases}
\dot{x}= -1+\frac{dx}{e^y(2t-1+x)-x};\\
\dot{y}= \frac{d(1-e^y)}{e^y(2t-1+x)-x};\\
x(0)=1; \, y(0)=\alpha_0,
\end{cases}
\end{equation}
$T_{\alpha_0}=\inf \left\{t \in [0,1]: x_{\alpha_0}\leq 0\right\}$, and $\hat{x}_{\alpha_0}(t)=\begin{cases} x_{\alpha_0}(t), & \text{ if } 0\leq t\leq T_{\alpha_0},\\ 0, & \text{ if } t>T_{\alpha_0}.\end{cases}$
\end{corollary}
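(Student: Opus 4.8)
The plan is to specialize Proposition \ref{thm:Rate Function Optimization} to the $d$-regular setting, so that the only real work is to identify the Hamiltonian system \eqref{eq:d-optima Hamilton} as the reduction of \eqref{eq:optima Hamilton} after using the known rigidity of the $s$- and $u$-coordinates. First I would recall from Corollary \ref{thm:LDP_EN} that $\hat I(x)=\int_0^1\hat L(t,x(t),\dot x(t))\,dt$ with $\hat L(t,x,y)=L((t,u(t),x),(1,-2d,y))$ and $u(t)=d(1-2t)$, and that this rate is finite only on trajectories with $s(t)=t$, $u(t)=d(1-2t)$; hence optimizing $\hat I$ over $A\subseteq D_{[0,1]}[0,1]$ is literally optimizing $I$ over the corresponding subset of $D_E[0,1]$ whose $s,u$ coordinates are pinned. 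Applying Proposition \ref{thm:Rate Function Optimization} to that subset, $\inf_{x\in A}\hat I(x)=\inf\{I(\hat x_{\alpha_0}):\hat x_{\alpha_0}\in\bar A\}$, where $\hat x_{\alpha_0}$ solves the full Hamiltonian ODE \eqref{eq:optima Hamilton}. The task is then to show that along these optimal curves the $(\alpha_s,\alpha_u)$-components of the costate are constant and the $(s,u)$-dynamics are forced, so the system collapses to a planar ODE in $(x,y)=(e,\alpha_e)$, and that constant initial values of $\alpha_s,\alpha_u$ can be absorbed without changing the constrained infimum.

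The key computational step is to differentiate $H$ from \eqref{eq:HX} in the $d$-regular case, i.e. $H(x,\alpha)=\alpha_1-2d\alpha_2-\alpha_3+d\log[1+(e^{-\alpha_3}-1)\,dx_3/x_2]$ for $x_3>0$. Since $H$ is \emph{affine} in $\alpha_1$ and $\alpha_2$ and independent of $x_1$, Hamilton's equations give $\dot\alpha_1=-\partial_{x_1}H=0$ and, because $\dot x_1=\partial_{\alpha_1}H=1$ and $\dot x_2=\partial_{\alpha_2}H=-2d$ regardless of the costate, the first two coordinates of $\hat x_{\alpha_0}$ are exactly $s(t)=t$, $u(t)=d(1-2t)$ — matching the constraint defining $A$'s lift. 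One then checks $\dot\alpha_2=-\partial_{x_2}H$, observes that although $\partial_{x_2}H\neq0$ in general, the $\alpha_2$-equation decouples (it does not feed back into $\dot x_3$ or $\dot\alpha_3$, which depend only on $x_2,x_3,\alpha_3$), so it may be solved last and contributes nothing to the cost integrand once we substitute $\dot x_1=1$, $\dot x_2=-2d$. Plugging $x_2=d(1-2t)$ into $\partial_{\alpha_3}H$ and $-\partial_{x_3}H$ and simplifying $1+(e^{-\alpha_3}-1)\,dx_3/x_2$ with $y:=\alpha_3$, $x:=x_3$ yields precisely
\[
\dot x=-1+\frac{dx}{e^y(2t-1+x)-x},\qquad \dot y=\frac{d(1-e^y)}{e^y(2t-1+x)-x},
\]
which is \eqref{eq:d-optima Hamilton}; this is the one genuinely routine-but-essential calculation. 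The stopping rule $T_{\alpha_0}=\inf\{t:x_{\alpha_0}(t)\le0\}$ and the flat continuation $\hat x_{\alpha_0}=0$ afterwards are inherited verbatim from Proposition \ref{thm:Rate Function Optimization} (the "$\sum_k e_k=0$" regime, here just "$e=0$"), and on that regime $H\equiv0$ and $\hat L\equiv0$, so $I(\hat x_{\alpha_0})=\int_0^{T_{\alpha_0}}\hat L(t,x_{\alpha_0},\dot x_{\alpha_0})\,dt=:F(\alpha_0)$.

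The last point to nail down — and the one I expect to be the main obstacle — is that restricting the costate's initial condition to a single scalar $\alpha_0=y(0)$, rather than the full $\alpha_0\in\R^{D+3}=\R^3$ allowed by Proposition \ref{thm:Rate Function Optimization}, does not shrink the attainable infimum. The reason is that varying $\alpha_s(0)$ and $\alpha_u(0)$ changes neither the trajectory $\hat x_{\alpha_0}$ (by the decoupling above) nor, as one verifies, the value $I(\hat x_{\alpha_0})$: in the Lagrangian picture $\hat L$ depends on $\dot x$ only through its $e$-component because the $s$- and $u$-components of $\dot x$ are identically $(1,-2d)$ on $\mathcal H_L$, so the Legendre-dual variables $\alpha_s,\alpha_u$ are "free" and contribute a term $\alpha_s\cdot 1+\alpha_u\cdot(-2d)-(\alpha_s-2d\alpha_u)=0$ to $\langle\alpha,\dot x\rangle-H$. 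Hence $I(\hat x_{\alpha_0})$ depends on $\alpha_0$ only through its third coordinate, and the three-parameter infimum in Proposition \ref{thm:Rate Function Optimization} reduces to the one-parameter infimum over $\alpha_0\in\R$ claimed in the statement; combined with the trajectory identification this is exactly $\inf_{x\in A}\hat I(x)=\inf\{F(\alpha_0):\hat x_{\alpha_0}\in\bar A,\ \alpha_0\in\R\}$, completing the proof. Care is needed only in handling the Skorohod closure $\bar A$ consistently between the two formulations, but since the map $\alpha_0\mapsto\hat x_{\alpha_0}$ and the constraint "$T_{\alpha_0}\ge T^*+\varepsilon$" are the same objects as in Proposition \ref{thm:Rate Function Optimization}, this is immediate.
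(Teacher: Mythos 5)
Your proposal is correct and follows the same route the paper intends when it simply asserts "It is a corollary of Proposition \ref{thm:Rate Function Optimization}": you apply that proposition, observe that the finiteness constraints $\dot s=1$, $\dot u=-2d$ pin the first two coordinates to $s(t)=t$, $u(t)=d(1-2t)$, verify that the Hamiltonian system collapses to the planar ODE \eqref{eq:d-optima Hamilton} after substituting $x_2=d(1-2t)$ (I checked the algebra and it matches), and argue that the $(\alpha_s,\alpha_u)$-components of the costate decouple from $(x_3,\alpha_3)$ and from the cost, so the three-parameter infimum over $\alpha_0\in\R^3$ reduces to the scalar infimum over $\alpha_0=\alpha_3(0)\in\R$. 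The only thing worth flagging is that the decoupling argument is most transparently stated as: the trajectory $\hat x_{\alpha_0}$ (and hence $I(\hat x_{\alpha_0})$, which is a functional of the trajectory alone) depends on $\alpha_0$ only through $\alpha_3(0)$, since $\dot x_1,\dot x_2,\dot x_3,\dot\alpha_3$ are independent of $\alpha_1,\alpha_2$; your Legendre-duality remark gets to the same conclusion but is a slightly more indirect phrasing of the same fact.
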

\begin{proof}
It is a corollary of Proposition \ref{thm:Rate Function Optimization}.
\end{proof}

%In Figure \ref{fig:F_alpha0}, the evolution of $F$ as a function of $\alpha_0$ is presented for $d=2,\dots, 10$.
%\matt{We can observe that when $d=2$, 
%(where the problem basically boils down to finding independent sets on circles), 
%the possibility of deviating from the fluid limit is much higher than for larger $d$. This could be explained %by the well-known abrupt change in the geometry for $d$-regular graphs when $d = 2$ or $d>2$.
%}
%In this case, the graph behaves like a single cycle as the number of vertices $N\rightarrow \infty$.

%\begin{figure}
%\begin{center}
%\includegraphics[scale=0.5]{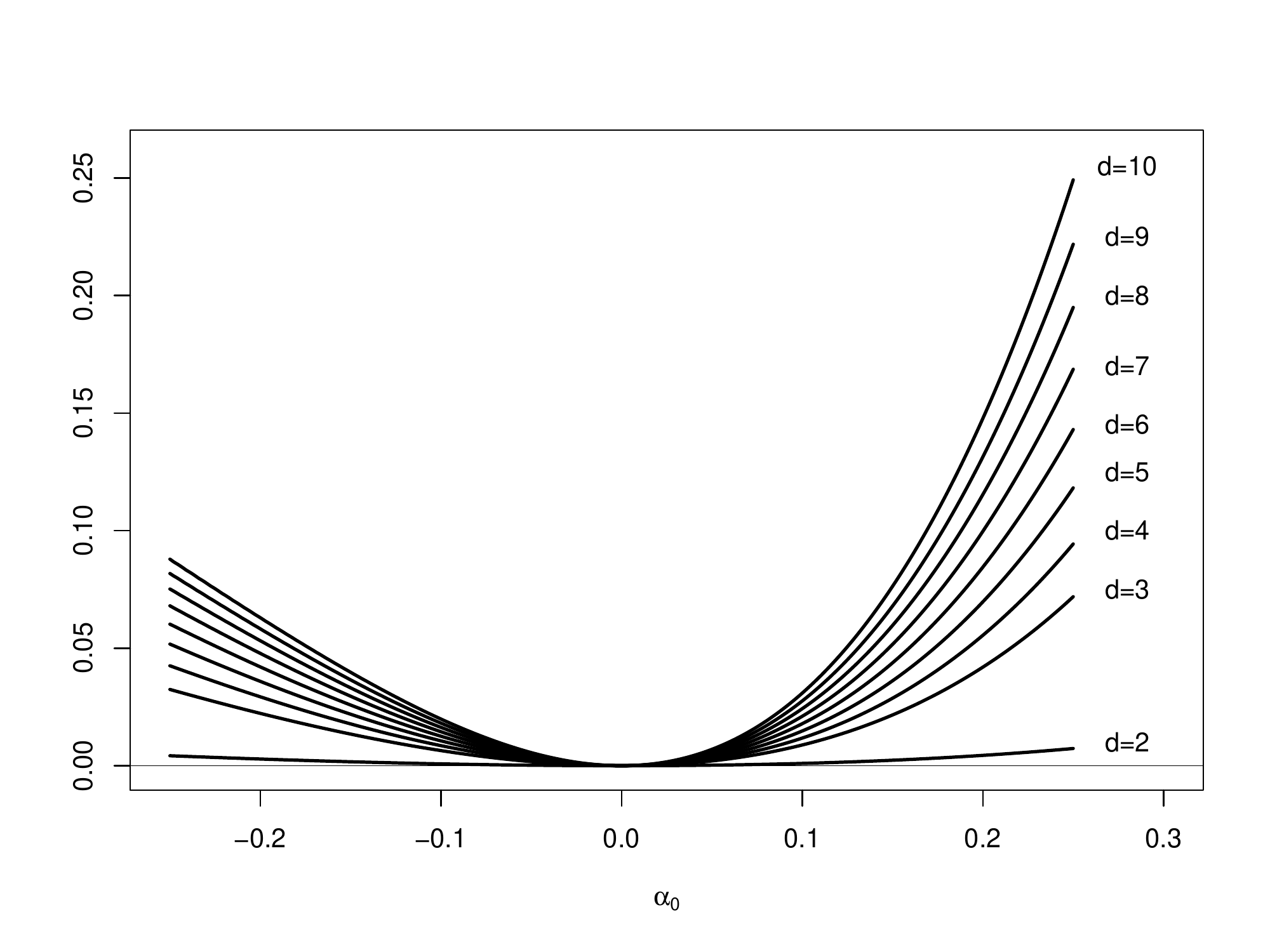} 
%\end{center}
%\caption{Evolution of $F(\alpha_0))$ as function of $\alpha_0$ for $d=2, \dots, 10$.}\label{fig:F_alpha0}
%\end{figure}

In Figure \ref{fig:F_alpha0} (left), the evolution of $F$ as a function of $\alpha_0$ is presented for $d=2,\dots, 10$. It is observed that when $d=2$ (where the problem basically boils down to finding independent sets on circles), the possibility of deviating from the fluid limit is much higher than for larger $d$. 
%This could be explained by the well-known abrupt change in the geometry for $d$-regular graphs when $d = 2$ or $d>2$.

\begin{figure}
\begin{tabular}{cc}
\includegraphics[scale=0.35]{images/F_alpha0_d2345678910.pdf} 
\includegraphics[scale=0.35]{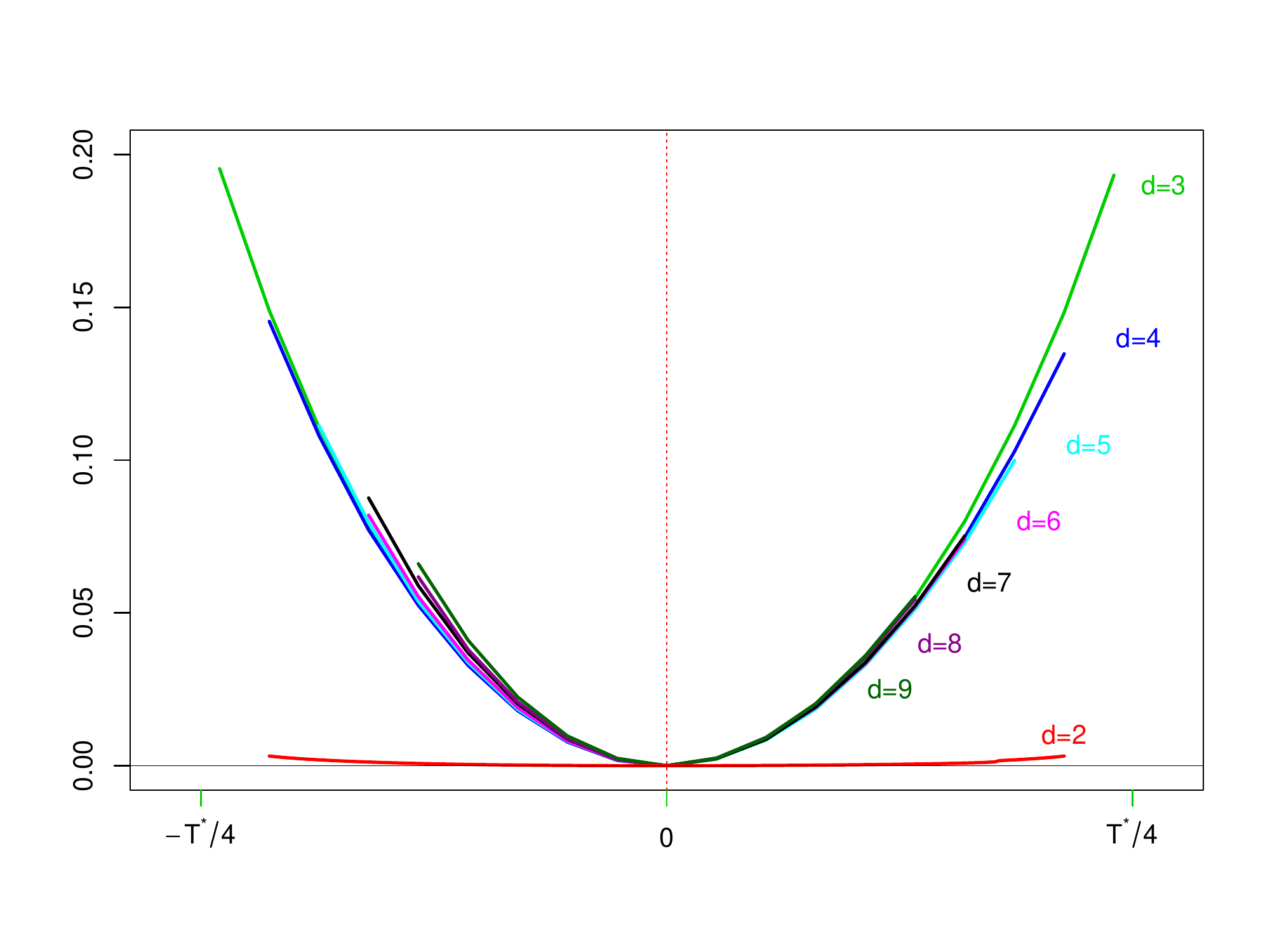}
\end{tabular}

\caption{\emph{Left:} Evolution of $F(\alpha_0)$ as function of $\alpha_0$ for $d=2, \dots, 10$. 
\emph{Right:} Evolution of $F\left(\alpha_0 (T^* \pm \varepsilon) \right)$ as function of $\varepsilon$ for $d=2, \dots, 9$.
}\label{fig:F_alpha0}
\end{figure}

\begin{corollary}
The sequence of stopping times $T_N^*$ defined over a $d$-regular graph $G(N,d)$ verifies: 
\begin{enumerate}
\item If $\varepsilon>0$ is such that $T^*+\varepsilon<1$, then $$\underset{N}{\lim} \frac{1}{N} \log \mathbb{P} \left(\frac{T^*_N}{N} \geq T^*+\varepsilon \right) = -F\left(\alpha_0 (T^* + \varepsilon)\right),$$ where $\alpha_0(T^*+\varepsilon)$  is the unique real number $\alpha_0>0$ such that $T_{\alpha_0} = T^*+\varepsilon$.
\item If $\varepsilon>0$ is such that $T^*-\varepsilon>0$, then $$\underset{N}{\lim} \frac{1}{N} \log \mathbb{P} \left(\frac{T^*_N}{N} \leq T^*-\varepsilon \right) = -F\left(\alpha_0 (T^* - \varepsilon)\right),$$ where $\alpha_0(T^*-\varepsilon)$ is the unique real number $\alpha_0<0$ such that $T_{\alpha_0} = T^*-\varepsilon$.
\end{enumerate}
In both cases $F(\alpha_0)$ and $T_{\alpha_0}$ are as in Proposition \ref{thm:d-Rate Function Optimization}.
\end{corollary}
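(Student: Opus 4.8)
The plan is to reduce the statement, via Corollary~\ref{thm:LDP_EN} and Corollary~\ref{thm:d-Rate Function Optimization}, to an optimisation over the single shooting parameter $\alpha_0$, and then to exploit the monotone (cooperative) structure of the planar Hamiltonian flow~\eqref{eq:d-optima Hamilton}. I will treat the first statement; the second is symmetric. First I would observe that, exactly as in the proof of Theorem~\ref{LDP_T}, the event $\{T_N^*/N\ge T^*+\veps\}$ coincides with $\{E^N\in A_\veps\}$, where $A_\veps\subset D_{[0,1]}[0,1]$ is the set of càdlàg paths $x$ with $x(0)=1$, $x$ decreasing, $0\le x\le 1$ and $\inf\{t:x(t)=0\}\ge T^*+\veps$. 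Applying the LDP of Corollary~\ref{thm:LDP_EN} and then the reduction of Corollary~\ref{thm:d-Rate Function Optimization} gives
\[
\lim_N \tfrac1N\log\PP\!\left(\tfrac{T_N^*}{N}\ge T^*+\veps\right)= -\inf\big\{F(\alpha_0):\ \alpha_0\in\R,\ T_{\alpha_0}\ge T^*+\veps\big\}.
\]
Hence it remains to prove: (i) $\alpha_0\mapsto T_{\alpha_0}$ is continuous and strictly increasing with $T_0=T^*$, so that in the admissible range of $\veps$ the equation $T_{\alpha_0}=T^*+\veps$ has a unique root $\alpha_0(T^*+\veps)$, which is positive, and $\{T_{\alpha_0}\ge T^*+\veps\}$ is the half-line $[\alpha_0(T^*+\veps),\infty)$ (intersected with the domain of $T_{\cdot}$); and (ii) $F$ is nondecreasing on $[0,\infty)$ and nonincreasing on $(-\infty,0]$, with $F(0)=0$. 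Granting (i) and (ii), the infimum of the nondecreasing $F$ over $[\alpha_0(T^*+\veps),\infty)$ is $F(\alpha_0(T^*+\veps))$, which is the first claim; for the second, $\{T_{\alpha_0}\le T^*-\veps\}=(-\infty,\alpha_0(T^*-\veps)]$ with $\alpha_0(T^*-\veps)<0$, on which $F$ is nonincreasing, so its infimum is again the endpoint value.

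For (i), I would first note that on the $d$-regular trajectories $u(t)=d(1-2t)$, so the state constraint $dx_3\le x_2$ forces $2t-1+x\le 0$; hence the denominator $e^y(2t-1+x)-x$ in~\eqref{eq:d-optima Hamilton} is $\le -x<0$ on $\{x>0\}$, the vector field is smooth there, solutions are unique and depend smoothly on $\alpha_0$, and since $\dot x=-1<0$ at $x=0$ the path $x_{\alpha_0}$ crosses $0$ transversally, so $T_{\alpha_0}$ is a smooth function of $\alpha_0$. Plugging $y\equiv0$ into~\eqref{eq:d-optima Hamilton} reduces the first equation to $\dot x=-1-\tfrac{dx}{1-2t}$, the fluid equation for $e$, so $T_0=T^*$. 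Strict monotonicity I would obtain from monotone-systems theory: a direct computation gives $\partial_y\dot x>0$ and $\partial_x\dot y\ge 0$ on $\{x>0\}$, so~\eqref{eq:d-optima Hamilton} is cooperative; since increasing $\alpha_0$ only raises the initial datum $(x(0),y(0))=(1,\alpha_0)$ in the product order, the trajectories are ordered and, using that $y_{\alpha_0}$ keeps the sign of $\alpha_0$ for $t>0$, one gets $x_{\alpha_0}(t)<x_{\alpha_0'}(t)$ for $\alpha_0<\alpha_0'$ and $t>0$, whence $T_{\alpha_0}<T_{\alpha_0'}$.

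For (ii) the plan is an envelope (Hamilton–Jacobi) computation. By the remark after Corollary~\ref{thm:LDP_EN}, $\hat L(t,x,\cdot)$ is the Legendre transform of the reduced Hamiltonian $\hat H(t,x,\cdot)=\Lambda_{W_{t,x}}(\cdot)$ and~\eqref{eq:d-optima Hamilton} are Hamilton's equations for $\hat H$. Differentiating $F(\alpha_0)=\int_0^{T_{\alpha_0}}\hat L(t,x_{\alpha_0},\dot x_{\alpha_0})\,dt$ in $\alpha_0$, integrating by parts and using the Euler–Lagrange equation, all interior contributions cancel; since $x_{\alpha_0}(0)=1$ is fixed and $x_{\alpha_0}(T_{\alpha_0})=0$ with transversal crossing, only a boundary term survives, and one is left with $\tfrac{d}{d\alpha_0}F(\alpha_0)=-\hat H\big(T_{\alpha_0},0,y_{\alpha_0}(T_{\alpha_0})\big)\,\tfrac{dT_{\alpha_0}}{d\alpha_0}=y_{\alpha_0}(T_{\alpha_0})\,\tfrac{dT_{\alpha_0}}{d\alpha_0}$, where $\hat H(t,0,y)=-y$ because $W_{t,x}\to\mathbf{Bin}(d,1)-(d+1)=-1$ as $x\to0$. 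From the $y$-equation, $\mathrm{sign}(\dot y)=\mathrm{sign}(y)$, so $y_{\alpha_0}$ keeps the sign of $\alpha_0$ along $[0,T_{\alpha_0}]$; combined with $\tfrac{dT_{\alpha_0}}{d\alpha_0}>0$ from (i), $\tfrac{d}{d\alpha_0}F(\alpha_0)$ has the sign of $\alpha_0$, and $F(0)=\hat I(\hat x_0)=0$ since $\hat x_0$ is the fluid limit. This yields (ii), and combining with (i) and the reduction above completes the proof.

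I expect the main obstacle to be part (i): ensuring the shooting trajectories never reach the singular locus $\{e^y(2t-1+x)-x=0\}$ of~\eqref{eq:d-optima Hamilton} (which is what the state constraint buys us), pinning down the range of admissible $\alpha_0$ so that the root $\alpha_0(T^*\pm\veps)$ genuinely exists for the relevant $\veps$, and extracting strict ordering of the hitting times from the cooperative comparison. The envelope identity in (ii) is comparatively soft, but its endpoint analysis must be done with care, since $\hat L(t,x,\cdot)$ degenerates as $x\to0$ and the surviving boundary term has to be read as a limit along the trajectory.
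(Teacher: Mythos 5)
Your proof is correct and follows the same overall route as the paper: reduce via Corollary~\ref{thm:d-Rate Function Optimization} to a one-parameter shooting problem, use monotonicity of $\alpha_0\mapsto T_{\alpha_0}$ and of $F$ on either side of $0$ to collapse the infimum to the endpoint $\alpha_0(T^*\pm\varepsilon)$. The difference is that the paper's proof merely \emph{states} the monotonicity of $x_{\alpha_0}$ (hence of $T_{\alpha_0}$) and then writes ``which implies $\inf_{\alpha_0\ge\alpha_0^*}F(\alpha_0)=F(\alpha_0^*)$'', without ever establishing that $F$ is nondecreasing on the positive half-line; as written this last equality is a gap. You supply both missing ingredients: the cooperativity of the planar Hamiltonian system~\eqref{eq:d-optima Hamilton} (checking $\partial_y\dot x\ge0$, $\partial_x\dot y\ge0$, using the state constraint $2t-1+x\le0$ to keep the denominator away from zero) gives strict ordering of trajectories and hence of hitting times; and the envelope/Hamilton--Jacobi computation $\tfrac{d}{d\alpha_0}F(\alpha_0)=y_{\alpha_0}(T_{\alpha_0})\,\tfrac{dT_{\alpha_0}}{d\alpha_0}$, together with $\hat H(t,0,y)=-y$ and sign-preservation of $y$, shows $F$ decreases on $(-\infty,0]$ and increases on $[0,\infty)$ with $F(0)=0$. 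This is exactly what is needed to justify the paper's final step, so your argument is not only compatible with the paper's but strictly more complete; the minor cautions you flag (degeneracy of $\hat L$ as $x\to0$, reading the boundary term along the trajectory) are the right places to be careful.
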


Figure \ref{fig:F_alpha0} (right) presents the evolution of $F\left(\alpha_0 (T^* \pm \varepsilon)\right)$ as a function of $\varepsilon \in \left[0, \frac{T^*}{4}\right]$ for $d=3, \dots, 9$, compared with $\varepsilon \in \left[0, \frac{T^*}{6}\right]$ for $d=2$. Note that in each case the time $T^*$ depends on $d$. Again, the abrupt change in the dynamics is observed for $d=2$ and $d>2$. 

\begin{proof} In this case, if $x_{\alpha_0}$ is the solution of Equation \eqref{eq:d-optima Hamilton} with $y(0)=\alpha_0$, then the following monotony property with respect to the initial condition $\alpha_0$ is verified:
$
\text{ if } \alpha_0 <\alpha_1 \Rightarrow x_{\alpha_0}(t)<x_{\alpha_1}(t) 
$
for all $t$, then $T_{\alpha_0} <T_{\alpha_1}$. In addition, it can be seen that for all $T\in\left(T^*,\, 1\right)$, there exists a unique value $\alpha_0= \alpha_0(T)>0$ such that $x_{\alpha_0}(T)=1$ (i.e. $T=T_{\alpha_0}$). Then, there is only one $\alpha_0^*>0$ such that $x_{\alpha_0^*}\left(T^*+\varepsilon\right)=1$. If $A_{\varepsilon}$ is the set defined in the proof of Theorem \ref{LDP_T}, then $\hat{x}_{\alpha_0} \in A_{\varepsilon} \Leftrightarrow \alpha_0 \geq \alpha_0^*$,
which implies that
$
 \underset{\{\alpha_0: \hat{x}_{\alpha_0}\in \bar{A}_{\varepsilon}\}}{\inf} F\left(\alpha_0\right) = \underset{\{\alpha_0 \geq \alpha_0^*\}}{\inf} F\left(\alpha_0\right)=F\left(\alpha_0^*\right).
$
\end{proof} 

%%%%%%%%%%%%%%%%%%%%%%%%%%%%%%%%%%%%%%%%%%%%%%%%%
\section{Proof of Theorem \ref{thm:LDPX^N}} \label{Proof_LDP}

The proof of Theorem \ref{thm:LDPX^N} goes similar to the one included with full details in \cite{bermolen2020} for the process that counts the number of unexplored vertices of an exploration algorithm over Erd\"os-Renyi graphs. Basically, it consists in verifying steps 1,2,3, 4, and using Theorems \textbf{5.15}, \textbf{8.14}, and \textbf{8.23} from \cite{F&K}. We briefly describe each of these steps below.  

The first step consists of proving the convergence of non-linear generators $\mathbf{H}_N(f)(x)=\log \left[e^{-Nf(x)}T^N\left(e^{Nf(x)}\right)\right]$, where $T^N$ is the linear generator of $\left\{\frac{X_n^N}{N}\right\}_n$, and derive the limit operator $\mathbf{H}$. In our case, using Taylor's theorem and Stirling's formula we prove that $\mathbf{H}_N$ converges to $\mathbf{H}$ in the following sense: $\underset{N\rightarrow \infty}{\lim}      \underset{x\in E^N}{\sup} \left|\mathbf{H}_N(f)(x)-\mathbf{H}(f)(x)\right|=0$ for all $f\in C^1(E)$, being $\mathbf{H}: C^1(E)\rightarrow \R$ such that $\mathbf{H}(f)(x)=H\left(x, \nabla f(x)\right)$, and $H$ is defined in Equation \eqref{eq:HX}. 

The second step consists of verifying the \emph{exponential compact containment condition} (see Condition \textbf{2.8} from \cite{F&K}). In our case, it is trivially verified since the state space $E$ is a compact subset of $\R^{D+3}$.

The third step consists of proving that $\mathbf{H}$ generates a semigroup $\mathbf{V}= \left\{V_t\right\}_t$. This issue is nontrivial and follows by showing that the Hamilton-Jacobi equation:
\begin{equation}\label{eq:Hamilton-Jacobi}
f(x)-\beta H \left(x,\nabla f(x)\right)-h(x)=0,    
\end{equation}
has a unique solution $f$ for all $h\in C(E)$ and $\beta>0$ in a viscosity sense. 
In our case, we use results from \cite{Kraaij} to prove that this Hamilton-Jacobi equation verifies the \emph{comparison principle}, which ensures the uniqueness of the viscosity solution, explicitly constructed in Chapter 9 of \cite{F&K}. 
As a consequence of Proposition {\bf 4.2} in \cite{Kraaij}, it is enough to prove that the following inequality holds:
\begin{equation}\label{eq:Condition-Kraaij}
\liminf_{\alpha\rightarrow \infty} H\left( x^{\alpha}, \alpha \psi_x\left(x^{\alpha}, y^{\alpha}\right)\right) -  H\left( y^{\alpha}, \alpha \psi_x\left(x^{\alpha}, y^{\alpha}\right)\right) \leq 0,   
\end{equation}
where $\psi_x(x,y)$ is the vector of partial derivatives w.r.t. $x=\left(s, u, e_0, \dots, e_D\right)$ of the good distance function $\psi(x,y)= \frac{1}{2}\left\Vert x-y\right\Vert^2$. $x^{\alpha}=\left(s^{x_{\alpha}}, u^{x_{\alpha}}, e^{x_{\alpha}}_0, \dots, e^{x_{\alpha}}_D\right) $ and $y^{\alpha}=\left(s^{y_{\alpha}}, u^{y_{\alpha}}, e^{y_{\alpha}}_0, \dots, e^{y_{\alpha}}_D\right)$ are the sequences constructed in Chapter 9 of \cite{F&K} (with $\alpha\rightarrow +\infty$) and verify $\left( x^{\alpha}, y^{\alpha}\right) \rightarrow (z,z)$ where $\mu(z)-v(z)=\underset{x \in E}{\sup} \left\{ \mu(x)- v(x)\right\}$ for a given subsolution $\mu$ and supersolution $v$ of Equation \eqref{eq:Hamilton-Jacobi}.

Finally, the limiting semigroup $\mathbf{V}=\left\{V_t\right\}_t$ usually admits a variational form $\left\{\mathcal{V}_t\right\}_t$, known as the \emph{Nisio semigroup} in control theory. Then, the fourth step consists of providing the more treatable form of the rate function $I$ presented in Theorem \ref{thm:LDPX^N}.
In our case, as $\mathbf{H}(f)(x)= H \left(x, \nabla f(x)\right)$ for each $x\in E$ and $H \leftrightarrow L$ \footnote{We use $H \leftrightarrow L$ to denote that $L(x, \beta)= \underset{\alpha}{\sup} \left\{ \left\langle \alpha, \beta\right\rangle - H\left( x, \alpha\right)\right\}$ and $H(x, \alpha)= \underset{\beta}{\sup} \left\{ \left\langle \alpha, \beta\right\rangle - L\left( x, \beta\right)\right\}$ }, we have that $\mathbf{H}$ can be written as 
$
\mathbf{H}(f)(x)= \underset{u \in U}{\sup} \left\{ A(f)(x,u)-L(x,u) \right\}, 
$
where $U= \R^{D+3}$ and $A:C^1(E)\rightarrow M\left(E\times U\right)$ is the linear operator given by $A(f)(x,u)=\left\langle \nabla f(x), u\right\rangle$. 
Then, we consider the Nisio semigroup corresponding to the control problem determined by $A$ and the cost function $-L$:
\begin{gather}\label{eq:Vt}
\mathcal{V}_t (f)(x_0)= \sup_{\{\left(\mathbf{x},\lambda \right) \in \mathcal{Y}: \; \mathbf{x}(0)=x_0\}} 
\left\{f(\mathbf{x}(t)) - \iint_{U\times [0,t]}  L(\mathbf{x}(s), u) \lambda\left(\text{d}u\times \text{d}s\right) \right\}
\end{gather}
for each $x_0 \in E$. $\mathcal{Y}\subset D_E[0,1] \times \mathcal{M}_m(U)$ is the \emph{control set} of the linear operator $A$ (see Definition \textbf{8.1} from \cite{F&K}), and $\mathcal{M}_m(U)$ is the space of Borel measures $\lambda$ on $U \times [0,1]$ satisfying $\lambda\left( U\times [0,t]\right)=t$ for all $t\in [0,1]$. Measure $\lambda$ is known as a \emph{relaxed control}. As $L$ is convex w.r.t. $\beta$, it follows that a deterministic control $\lambda\left(\text{d}u\times \text{d}s\right)=\delta_{u(s)}(\text{d}u)\text{d}s$ is always the control with smallest cost by Jensen's inequality, and the supremum in Equation \eqref{eq:Vt} is attained on $\mathcal{Y}_{\mathcal{AC}}:=\left\{ (\mathbf{x}, \lambda) \in \mathcal{Y}: \, \mathbf{x} \in \mathcal{AC}, \; \mathbf{x}(0)=x_0 \right\}$, being $\mathcal{AC}$ the space of absolutely continuous functions $\mathbf{x}: E \rightarrow \R$.\\
Then, as consequence of Theorems \textbf{8.14} and \textbf{8.23} from \cite{F&K}, it is enough to prove that Conditions \textbf{8.9}, \textbf{8.10} and \textbf{8.11} from \cite{F&K} are verified to prove that $V_t= \mathcal{V}_t$ and $I$ can be written as in Theorem \ref{thm:LDPX^N}.\\
To prove these conditions, we consider elements $\left(\mathbf{x}, \lambda\right)$ from $\mathcal{Y}_{\mathcal{AC}}$ and use that $L\left(x, q(x)\right)=0$ for all $x\in E$ if $q(x)=H_{\alpha}\left(x,0\right)$, and $H\left(x, \nabla f(x)\right)=\left\langle \nabla f(x), q_f(x)\right\rangle - L\left(x, q_f(x)\right)$ if $q_f(x)=H_{\alpha}\left(x, \nabla f(x)\right)$.

In a nutshell, as a consequence of the first two steps, the process verifies the exponential tightness condition; the third step assures the existence of an LDP, and the fourth step provides the useful version of the rate presented in this theorem. 
%%%%%%%%%%%%%%%%%%%%%%%%%%%%%%%%%%%%%%%%%%%%%%%%%%%%%%%%%%%%%%%%%%%
%%                                                               %%
%% Use the two commands below for producing your bibliography    %%
%% with bibtex, then comment again the commands and include the  %%
%% content of the .bbl file in this file below the commands.     %%
%%                                                               %%
%%%%%%%%%%%%%%%%%%%%%%%%%%%%%%%%%%%%%%%%%%%%%%%%%%%%%%%%%%%%%%%%%%%

%\newpage
\bibliographystyle{amsplain}
\bibliography{References.bib}

% add below the content of your .bbl file produced by bibtex.

%\begin{thebibliography}{99}

%\bibitem{doob} Doob, J. L.: Heuristic approach to the Kolmogorov-Smirnov
%  theorems. \emph{Ann. Math. Statistics} \textbf{20}, (1949), 393--403.
%  \MR{0030732}

%\end{thebibliography}

%%%%%%%%%%%%%%%%%%%%%%%%%%%%%%%%%%%%%%%%%%%%%%%%%%%%%%%%%%%%%%%%%%%
%%                                                               %%
%% You may add acknowledgments (optional).                       %%
%%                                                               %%
%%%%%%%%%%%%%%%%%%%%%%%%%%%%%%%%%%%%%%%%%%%%%%%%%%%%%%%%%%%%%%%%%%%

%\ACKNO{We are grateful to Martin Hairer who provided a nice \texttt{MR} macro and to S\'ebastien Gou\"ezel for his useful comments on the internals of the class file.}

%%%%%%%%%%%%%%%%%%%%%%%%%%%%%%%%%%%%%%%%%%%%%%%%%%%%%%%%%%%%%%%%%%%
%%                                                               %%
%% You have reached the end of your document.                    %%
%%                                                               %%
%%%%%%%%%%%%%%%%%%%%%%%%%%%%%%%%%%%%%%%%%%%%%%%%%%%%%%%%%%%%%%%%%%%

\end{document}